\documentclass[reqno, 11pt]{amsart}

\usepackage[foot]{amsaddr}
\usepackage[style = numeric-comp,
            maxbibnames = 100,
            maxcitenames = 2,
            isbn = false,
            giveninits = true,
            backend = biber,
            url=false]{biblatex}
\usepackage{booktabs, array} 
\usepackage{hyperref} 
\usepackage{graphicx} 
\usepackage{tikz}
\usepackage{xcolor, soul}
\sethlcolor{yellow}
\colorlet{shadecolor}{yellow}
\usepackage{mathtools, nccmath}
\usepackage{framed}
\usepackage{enumitem}
\usepackage{caption, subcaption} 
\usepackage{algorithm} 
\usepackage{algpseudocode} 
\usepackage{longtable}
\algrenewcommand\algorithmicrequire{\textbf{Input:}}
\algrenewcommand\algorithmicensure{\textbf{Output:}}

\setcounter{MaxMatrixCols}{20}

\makeatletter
\makeatother

\usepackage{geometry} 
\geometry{a4paper,left=2.5cm,right=2.5cm, top=2.5cm, bottom=2.5cm}

\allowdisplaybreaks 
\setlength{\parindent}{0pt}
\setlength{\parskip}{1em}

\graphicspath{ {./images/} }
\addbibresource{export.bib}



\DeclareMathOperator{\Tr}{Tr}
\newcommand{\reals}[1]{\ensuremath{\mathbb{R}^{#1}}}
\DeclareMathOperator{\rank}{rank}
    \newtheorem{theorem}{Theorem}
    \newtheorem{lemma}[theorem]{Lemma}
    
    \newtheorem{proposition}{Proposition}
    \newtheorem{corollary}{Corollary}

\newcommand{\projector}{\ensuremath{P}}
\newcommand{\projectorT}{\ensuremath{P^{\top}}}
\newcommand{\dimoriginal}{\ensuremath{n}}
\newcommand{\dimprojector}{\ensuremath{k}}
\newcommand{\numconstraints}{\ensuremath{m}}
\newcommand{\projectormap}[1]{\ensuremath{\projector{#1}\projectorT}}
\newcommand{\projectorinvmap}[1]{\ensuremath{\projectorT{#1}\projector}}
\newcommand{\objectivesdp}{\ensuremath{\langle C , X \rangle}}
\newcommand{\constraintsdp}{\ensuremath{\langle A_i, X \rangle}}
\newcommand{\ubidual}{\ensuremath{ub_i}}
\newcommand{\lbidual}{\ensuremath{lb_i}}
\newcommand{\subgausnorm}[1]{\ensuremath{\|{#1}\|_{\psi_2}}}

\newcommand{\twonorm}[1]{\ensuremath{\|{#1}\|_{2}}}
\newcommand{\norm}[1]{\ensuremath{\|{#1}\|}}
\newcommand{\fnorm}[1]{\ensuremath{\|{#1}\|_F}}

\newcommand{\expectedvalue}[1]{\ensuremath{\mathbb{E}[{#1}]}}
\newcommand{\probability}[1]{\ensuremath{\mathbb{P}[{#1}]}}
\newcommand{\bigO}{\ensuremath{\mathcal{O}}}
\newcommand{\mainconstant}{\ensuremath{\mathcal{C}}}

\author{Monse Guedes-Ayala$^1$}
\email{mguedes@ed.ac.uk (M. Guedes-Ayala)}
\address{$^1$School of Mathematics, University of Edinburgh, Scotland}
\author{Pierre-Louis Poirion$^2$}
\address{$^2$Center for Advanced Intelligence Project, RIKEN, Japan}
\author{Lars Schewe$^1$}
\author{Akiko Takeda$^{2,}$$^3$}
\address{$^3$Department of Creative Informatics, Graduate School of Information Science and Technology, University of Tokyo, Japan}
\email{pierre-louis.poirion@riken.jp (P.L. Poirion)}
\email{lars.schewe@ed.ac.uk (L. Schewe)}
\email{takeda@mist.i.u-tokyo.ac.jp (A. Takeda)}

\title[Sparse Sub-gaussian Random Projections for Semidefinite Programming Relaxations]{Sparse sub-gaussian Random Projections for Semidefinite Programming Relaxations}

\begin{document}

\begin{abstract}
Random projection, a dimensionality reduction technique, has been found useful in recent years for reducing the size of optimization problems. 
In this paper, we explore the use of sparse sub-gaussian random projections to approximate semidefinite programming (SDP) problems by reducing the size of matrix variables, thereby solving the original problem with much less computational effort.
We provide some theoretical bounds on the quality of the projection in terms of feasibility and optimality that explicitly depend on the sparsity parameter of the projector. 
We investigate the performance of the approach for semidefinite relaxations appearing in polynomial optimization, with a focus on combinatorial optimization problems.
In particular, we apply our method to the semidefinite relaxations of \textsc{maxcut} and \textsc{max-2-sat}. 
We show that for large unweighted graphs, 
we can obtain a good bound by solving a projection of the semidefinite relaxation of \textsc{maxcut}. 
We also explore how to apply our method to find the stability number of four classes of imperfect graphs by solving a projection of the second level of the Lasserre Hierarchy. 
Overall, our computational experiments show that semidefinite programming problems appearing as relaxations of combinatorial optimization problems can be approximately solved using random projections as long as the number of constraints is not too large. 
\end{abstract}

\maketitle

\section{Introduction}\label{sec: introduction}
Random projection (RP), sometimes also referred to as sketching, is a dimensionality reduction technique that has traditionally been used to speed up Euclidean distance-based algorithms such as $k$-means clustering \cite{Boutsidis2010} or $k$-nearest neighbours \cite{Indyk2007}. 
It is based on the Johnson–Lindenstrauss Lemma (JLL), which states that a set of points in a large dimensional space can be projected into a lower dimensional space while approximately preserving the distance between the points, see \cite{Woodruff2014} for more details.
However, in recent years RPs have been proven to be useful in solving mathematical programs more efficiently. 
Several papers have explored how different types of Mathematical Programming (MP) formulation might be approximately invariant (regarding feasibility and optimality) with respect to randomly projecting the input parameters. 
In \cite{Vu2018} the authors explore how to use random projections to approximately solve linear programming problems by projecting their constraints, \cite{DAmbrosio2020} covers RPs for quadratic programs, while \cite{Liberti2021} presents results for conic programming. 
The first and last papers explore how to reduce the size of the problem by projecting the constraints, while the second investigates how to reduce the dimension of the variables. 

Semidefinite programming (SDP) is the optimization of linear functions over the intersection of the positive semidefinite cone and an affine subspace. 
The most common solution approaches for general SDP problems are interior point methods (IPM). 
However, these methods run into scalability problems since they require storing large dense matrices. 
The most common solution for increasing scalability of SDP is to exploit some features of the problem such as sparsity \cite{Fukuda2001, Laserre2015, Zheng2018, Burer2003} or symmetry \cite{Gatermann2004, Vallentin2009, Klerk2010}. 
It is also possible to find a feasible solution for large problems by limiting the rank of the solution \cite{Burer2003Low, Burer2004, Journe2010}. 
Two other proposed solution approaches are the Alternating-Direction Method of Multipliers (ADMM) \cite{Donoghue2013} and augmented Lagrangian methods \cite{Zhao2010}. 
First-order algorithms have also been developed such as that of \cite{Renegar2017}, the iterative projection methods of Henrion and Malick \cite{Henrion2011}, the conditional gradient–based augmented Lagrangian framework of Yurtsever et al. \cite{Yurtsever2019}, or the regularization methods of Nie and Wang \cite{Nie2009}, which are tailored to SDP relaxations of polynomial optimization. 
More recently, RPs have been proposed to reduce the size of SDP problems. 
Liberti et al. explore in \cite{Liberti2023} how to use RPs to solve semidefinite programming problems by reducing the number of constraints of the problem.
In \cite{Poirion2023} the authors extend the framework of RPs for constraint aggregation to linear and semidefinite problems with linear inequalities. 
Articles such as \cite{Clarkson2017, Tropp2017, Ding2019} also explore random projections for low-rank approximations for semidefinite programming.
The authors of \cite{Bluhm2019} investigate how to reduce the size of the matrix appearing in SDP, instead of aggregating constraints, by applying a congruence operator with a random projection. 
This is similar to the approach presented in this paper. However, we improve the bounds proposed by \cite{Bluhm2019} and update some of the most basic results of RPs for sub-gaussian projectors to make them dependent of the sparsity parameter of the projector. 
This is key to be able to estimate in advance the performance of a sparse projector. 
Moreover, we test the performance of our method, contrary to \cite{Bluhm2019}, where only theoretical results are presented.

In this paper, we extend the current theory of RPs for SDP and propose some applications in combinatorial optimization.
We propose a new method for reducing the size of the matrices appearing in SDP using sparse sub-gaussian random projections.
We update the most basic RP results, including the JLL, to include the sparsity parameter of the sub-gaussian projector in the probabilistic bounds. 
To the best of our knowledge, this is the first paper that presents optimality and feasibility bounds for MPs that depend on the sparsity of the projector. 
We show that we can solve a projected version of semidefinite relaxation with a bounded error. 
We also prove that the projected problem is feasible with high probability, up to some feasibility tolerance. 
This document is structured as follows. Following this introduction, we present in Section \ref{sec: RP for SDP} the projected problem and some basic results. 
In Section \ref{sec: properties sub-gaussian RPs} we present some of the most important properties of sparse sub-gaussian random projections that will be used throughout the rest of the paper. 
Section \ref{sec: feasibility and optimality bound} presents the main results of the paper, which state that the feasibility and optimality of SDPs can be approximately preserved by RPs. 
We discuss the computational complexity of our methodology in Section \ref{sec: computational complexity}. 
Then, in Section \ref{sec: computational experiments} we present some computational experiments. 
In particular, we approximately solve the semidefinite relaxations of \textsc{maxcut} and \textsc{max-2-sat} and the second level of the SOS relaxation of the stable set problem. 
Lastly, we conclude by discussing possible future research directions.

\section{Projecting Semidefinite Programming Problems}\label{sec: RP for SDP}

A well-known result in geometry and machine learning is the Johnson-Lindenstrauss Lemma (JLL), which states that for a set of high dimensional points, we can randomly project the points to a lower dimensional space while approximately preserving the Euclidean distances between the points. More formally, it states that for any $\epsilon \in (0,1)$ and for any finite set $T \subseteq \reals{\dimoriginal}$ with $m$ points and $\dimprojector = O( \frac{\ln m}{\epsilon^2})$  there is a linear map $f: \reals{\dimoriginal} \rightarrow \reals{\dimprojector}$ such that 
\begin{equation}\label{basic JLL}
    (1-\epsilon) \norm{x - y}^2 \leq \norm{f(x) - f(y)}^2 \leq (1+ \epsilon)\norm{x - y}^2 \ \text{for all } x,y \in T.
\end{equation}
An even more basic result, which is widely used in \cite{Liberti2021, Dirksen2015, DAmbrosio2020, Vu2018}, is that a random projection is a matrix $\projector \in \mathbb{R}^{\dimprojector \times \dimoriginal}$ sampled from a Gaussian distribution such that for any $x \in \mathbb{R}^\dimoriginal$ and $\epsilon \in (0,1)$ there is a universal constants $\mathcal{C}$ such that 
\begin{equation}\label{basic RP}
    (1-\epsilon)\|x\|^2 \leq \|\projector x\|^2 \leq (1+\epsilon)\|x\|^2 
\end{equation}
with probability at least $1 - 2e^{-\mathcal{C}\epsilon^2 \dimprojector}$. Since this probability converges exponentially to 1, we say that the event holds with arbitrarily high probability (w.a.h.p.). 
The JLL can be seen as a corollary of this statement by applying the union bound to all the differences between distinct points in $T$. 
These two results are the foundation of the theory of random projections for MPs. 
However, dense Gaussian projectors are not computationally efficient in practice. Following this, sparser sub-gaussian projectors have been proposed in papers such as \cite{DAmbrosio2020, Achlioptas2001, Matouek2008, Ailon2009}, where it has been shown that for sub-gaussian matrices both \eqref{basic JLL} and \eqref{basic RP} hold. 
Following this, from now on we assume that $\projector$ can also be a sub-gaussian projector. 
The specific distribution of the entries of $\projector$ and how this distribution might affect the probabilistic bounds in \eqref{basic JLL} and \eqref{basic RP} will be discussed in Section \ref{sec: properties sub-gaussian RPs}. 
For now, let us explore how to use a projector $\projector \in \reals{\dimprojector \times \dimoriginal}$ to reduce the variable matrix of semidefinite programming problems. 

Recall that the general SDP problem with a matrix variable $X$ over the cone of positive semidefinite (psd) matrices, i.e. matrices with only non-negative eigenvalues, can be written as
\begin{align*}\tag{SDP}\label{sdp}
    \min \quad & \objectivesdp \\
    \text{s.t.} \quad & \langle A_i, X \rangle = b_i \quad  i= 1, \hdots, \numconstraints, \\
    & X \succeq 0,
\end{align*}
where $X, C, A_i$ are $\dimoriginal \times \dimoriginal$  matrices, $b$ is a given vector in $\mathbb{R}^m$, and $\langle M, N \rangle = \text{tr} (M^\top N)$ denotes the Frobenius inner product. 
As discussed before, the most common methods for solving general SDP problems are interior point methods (IPMs). 
However, recall that in practice these methods need to store dense matrices at every iteration, making them run into memory problems for large-scale applications. 
Following this, a reduction in the size of the variable matrix is very desirable for practical applications. 

We can use random projections to project the psd matrix $X$ of the SDP problem with the map $ \phi_{\projector}:  \reals{\dimoriginal \times \dimoriginal} \rightarrow  \reals{\dimprojector \times \dimprojector}$ that maps $ A \in \reals{\dimoriginal \times \dimoriginal}$ to $ \projectormap{A} \in \reals{\dimprojector \times \dimprojector}$, where $P \in \reals{\dimprojector \times \dimoriginal}$ is a random projection as defined in \eqref{basic RP}, or its sub-gaussian extension.
Recall that a map is said to be positive if it maps positive elements to positive elements. 
In our case, $\phi_{\projector}$ preserves the positive definiteness of the matrices, so it is a positive map.   
Consider the following SDP problem and its dual: \\
\hfill \\
\begin{minipage}[t]{0.45\textwidth}
\begin{align*}\tag{$\mathcal{P}$}\label{primal}
    \min \quad & \objectivesdp \\
    \text{s.t.} \quad & \constraintsdp  = b_i \quad  i= 1, \hdots, \numconstraints, \\
    & X \succeq 0
\end{align*}
\end{minipage}
\hfill 
\begin{minipage}[t]{0.45\textwidth}
\begin{align*}\tag{$\mathcal{D}$}\label{dual}
    \max \quad & b^{\top} y \\
    \text{s.t.} \quad & \sum_{i=1}^{\numconstraints} A_i y_i \preceq C, 
\end{align*}
\end{minipage}

We can then project these problems using the random projection $P \in \reals{\dimprojector \times \dimoriginal}$ and the positive map $\phi_{\projector}$ as \\
\begin{minipage}[t]{0.45\textwidth}
\begin{align*}\tag{$\mathcal{RP}$}\label{projected primal}
    \min \quad & \langle PCP^{\top}, Y \rangle \\
    \text{s.t.} \quad & \langle \projectormap{A_i},Y \rangle  = b_i \quad  i= 1, \hdots, \numconstraints, \\
    & Y \succeq 0
\end{align*}
\end{minipage}
\hfill 
\begin{minipage}[t]{0.45\textwidth}
\begin{align*}\tag{$\mathcal{RD}$}\label{projected dual}
    \max \quad & b^{\top} z \\
    \text{s.t.} \quad & \sum_{i=1}^{\numconstraints} \projectormap{A_i} z_i \preceq PC P^{\top}, \\
\end{align*}
\end{minipage} \\ 
where $Y$ is a new matrix of dimension $\dimprojector \times \dimprojector$. 
The authors of \cite{Bluhm2019} already proposed to reduce the size of SDP problems using these positive maps. 
They show that the difference between the inner product of projected and unprojected matrices cannot be bounded solely by the Frobenious norm and propose to bound it using the nuclear norm of the matrices instead. 
We update and extend their results by using a tighter bound taken from \cite{Poirion2023}, which combines the nuclear and the Frobenious norm of the matrices. 
More details of this will be discussed in Section \ref{sec: properties sub-gaussian RPs}. But now, let us see some basic relations between the projected and original problems. 

First, it is easy to see that a solution $\hat{Y}$ to problem \eqref{projected primal} is going to be feasible for \eqref{primal} when lifted by $X = \projectorinvmap{\hat{Y}}$. 
Similarly, any solution to the original dual is also feasible for the projected dual.
\begin{proposition}\label{prop: feasibility of lifting}
    For any feasible solution $Y$ to the projected problem \eqref{projected primal}, $\projectorinvmap{Y}$ is also feasible for the original problem \eqref{primal}.
\end{proposition}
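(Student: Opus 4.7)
The claim is a direct verification that the congruence lifting $X = P^\top Y P$ satisfies the two defining conditions of $(\mathcal{P})$: the affine equalities $\langle A_i, X \rangle = b_i$ and the cone constraint $X \succeq 0$. My plan is to handle each in turn, relying only on elementary linear algebra (the cyclicity of the trace and the standard factorization of psd matrices).

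For the affine constraints, I will substitute $X = \projectorinvmap{Y}$ into $\langle A_i, X \rangle$ and push the projector around using the definition $\langle M, N \rangle = \Tr(M^\top N)$ together with the cyclic invariance of the trace. Concretely,
\begin{equation*}
    \langle A_i, \projectorinvmap{Y}\rangle = \Tr\!\left(A_i^\top P^\top Y P\right) = \Tr\!\left(P A_i^\top P^\top Y\right) = \langle \projectormap{A_i}, Y\rangle = b_i,
\end{equation*}
where the last equality is the assumed feasibility of $Y$ in $(\mathcal{RP})$. (If $A_i$ is taken symmetric, as is standard in SDP, the step is identical; otherwise the same computation works because $(PA_iP^\top)^\top = PA_i^\top P^\top$.)

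For positive semidefiniteness, I will use the fact that $Y \succeq 0$ admits a factorization $Y = U^\top U$ for some $U$, so that
\begin{equation*}
    \projectorinvmap{Y} = P^\top U^\top U P = (UP)^\top(UP) \succeq 0.
\end{equation*}
This is exactly the statement that the map $\phi_P$ is positive (as was already noted in the text preceding the statement), and no probabilistic argument about $P$ is needed here.

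There is no real obstacle; the only point that requires a pinch of care is keeping track of where the transpose lives in the inner product $\langle \cdot,\cdot\rangle = \Tr(M^\top N)$ so that the movement of $P$ and $P^\top$ under the trace is justified. Once those bookkeeping details are handled, the feasibility of the lift is immediate and the proposition is proved.
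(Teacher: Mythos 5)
Your proof is correct and follows essentially the same direct-verification route as the paper: you make explicit, via cyclicity of the trace, the ``basic algebraic manipulations'' the paper leaves implicit for the equality constraints, and your factorization $Y = U^\top U$ giving $\projectorinvmap{Y} = (UP)^\top (UP) \succeq 0$ is an equivalent elementary substitute for the paper's quadratic-form check $z^\top \projectorinvmap{Y} z = w^\top Y w \geq 0$ with $w = Pz$. No gaps.
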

\begin{proof}
    Let $\hat{Y}$ be a feasible solution to the projected problem. 
    Then,  $\langle \projectormap{A_i},\hat{Y} \rangle  = b_i$ holds for all $i$. 
    Define the lifting of $\hat{Y}$ into the original dimension as $X = \projectorinvmap{\hat{Y}}$. 
    Then, after some basic algebraic manipulations, one can check that $\langle A_i,\projectorinvmap{\hat{Y}} \rangle = b_i$ for all $i$ as well. 
    Secondly, since $\hat{Y}$ is psd, $X = \projectorinvmap{\hat{Y}}$ is psd as well. 
    To see this, consider $z^{\top}Xz = z^{\top}\projectorinvmap{Y}z$ for any vector $z \in \reals{\dimoriginal}$. 
    Let $z^{\top}P^{\top} = w^{\top}$ and $Pz = w$.
    Then we have that $z^{\top}Xz = z^{\top}\projectorinvmap{\hat{Y}}z = w^{\top} \hat{Y} w \geq 0$ for all $w \in \reals{\dimprojector}$ since $\hat{Y}$ is psd. 
\end{proof}
\begin{proposition}\label{prop: feasibility of dual}
    For any feasible dual solution $y$ of the original problem \eqref{dual}, $y$ is also feasible for the projected dual \eqref{projected dual}.
\end{proposition}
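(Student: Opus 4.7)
The plan is to substitute $z = y$ directly into the constraint of the projected dual \eqref{projected dual} and show the resulting matrix inequality follows from the original dual feasibility of $y$ together with the fact that $\phi_{\projector}$ is a positive map, which was already observed just before Proposition 1.

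More concretely, I would first rewrite the original dual constraint as $C - \sum_{i=1}^{\numconstraints} A_i y_i \succeq 0$ and the projected dual constraint (after setting $z = y$) as $\projector C \projectorT - \sum_{i=1}^{\numconstraints} \projectormap{A_i} y_i \succeq 0$. The key algebraic observation is the linearity of congruence: one can pull $\projector$ and $\projectorT$ outside the finite sum to obtain
\begin{equation*}
\projector C \projectorT - \sum_{i=1}^{\numconstraints} \projector A_i \projectorT y_i \;=\; \projector \Bigl( C - \sum_{i=1}^{\numconstraints} A_i y_i \Bigr) \projectorT.
\end{equation*}

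Then I would invoke the positivity of $\phi_{\projector}$: since $M := C - \sum_i A_i y_i \succeq 0$ by the feasibility of $y$ in \eqref{dual}, we have $\projector M \projectorT \succeq 0$. The psd-preservation itself is a one-line check identical to the one used in Proposition \ref{prop: feasibility of lifting}: for any $w \in \reals{\dimprojector}$, $w^{\top} \projector M \projectorT w = (\projectorT w)^{\top} M (\projectorT w) \geq 0$. This gives exactly the projected dual constraint at $z = y$.

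I do not expect any real obstacle here, since the statement is essentially the dual counterpart of Proposition \ref{prop: feasibility of lifting} and uses only the linearity of the sum and the positivity of the congruence map; no probabilistic argument or sparsity consideration on $\projector$ is needed.
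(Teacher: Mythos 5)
Your proposal is correct and follows essentially the same route as the paper: the paper likewise sets $z=y$ and deduces $\sum_i \projectormap{A_i} y_i \preceq \projector C \projectorT$ from the positivity (and implicit linearity) of the congruence map $\phi_{\projector}$. You merely make explicit the linearity step and the one-line psd check that the paper leaves implicit.
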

\begin{proof}
    Since $\hat{y}$ is feasible for the original dual, it satisfies $\sum_{} A_i \hat{y}_i \preceq C$. Now, since $ \phi_{\projector}: A \rightarrow \projectormap{A}$ is a positive map, $\sum_{} \phi_{\projector}(A_i) \hat{y}_i \preceq \phi_{\projector}(C)$ is also satisfied, i.e. $\hat{y}$ is also feasible for the projected dual. 
\end{proof}
This allows us to get a certificate of feasibility by solving the projected problem. 
If the projected problem is feasible, since its feasible region is contained in that of the original problem, then the original problem is also feasible. This inclusion of feasible regions can be summarised in the following corollary, which follows directly from Proposition \ref{prop: feasibility of lifting} and Proposition \ref{prop: feasibility of dual}.
\begin{corollary}\label{cor: inclusion of feasible regions}
    Let $F$ denote the feasible region of a given problem. Then the following inclusions hold:
    \begin{align*}
        F_{\eqref{projected primal}} & \subseteq F_{\eqref{primal}},\\
        F_{\eqref{dual}} & \subseteq F_{\eqref{projected dual}}.
    \end{align*}
\end{corollary}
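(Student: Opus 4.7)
The plan is to derive both inclusions as immediate consequences of the two preceding propositions, essentially just repackaging them as statements about feasible sets rather than about individual feasible points. Since the statement is labelled a corollary and the hypotheses have just been proved, no new technical ingredient is needed; the task is really to check that the definitions line up correctly.

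For the dual inclusion $F_{\eqref{dual}} \subseteq F_{\eqref{projected dual}}$, both feasible regions live in the same space $\mathbb{R}^{\numconstraints}$, so the inclusion is literal set inclusion. I would simply take an arbitrary $y \in F_{\eqref{dual}}$ and invoke Proposition \ref{prop: feasibility of dual} to conclude $y \in F_{\eqref{projected dual}}$. This is one line.

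For the primal inclusion $F_{\eqref{projected primal}} \subseteq F_{\eqref{primal}}$, the subtlety is that $F_{\eqref{projected primal}} \subseteq \reals{\dimprojector \times \dimprojector}$ while $F_{\eqref{primal}} \subseteq \reals{\dimoriginal \times \dimoriginal}$, so the inclusion must be interpreted through the lifting map $Y \mapsto \projectorinvmap{Y}$. I would make this identification explicit (either by clarifying in the corollary statement, or by reading the inclusion as $\{\projectorinvmap{Y} : Y \in F_{\eqref{projected primal}}\} \subseteq F_{\eqref{primal}}$), and then apply Proposition \ref{prop: feasibility of lifting} pointwise: for every $Y \in F_{\eqref{projected primal}}$ the lifted matrix $\projectorinvmap{Y}$ satisfies both the linear constraints $\langle A_i, \projectorinvmap{Y}\rangle = b_i$ and positive semidefiniteness, hence lies in $F_{\eqref{primal}}$.

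The only real obstacle, if one can call it that, is notational rather than mathematical: making sure the reader understands that the first inclusion is via the lifting operator, not an inclusion of sets living in the same ambient space. Once that convention is fixed, the corollary is just a rephrasing of Propositions \ref{prop: feasibility of lifting} and \ref{prop: feasibility of dual}, so the proof can be kept to two short sentences citing each proposition in turn.
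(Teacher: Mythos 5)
Your proposal is correct and matches the paper, which gives no separate argument and simply notes that the corollary follows directly from Propositions \ref{prop: feasibility of lifting} and \ref{prop: feasibility of dual}. Your added remark that the primal inclusion must be read through the lifting map $Y \mapsto \projectorinvmap{Y}$ is a sensible clarification of a convention the paper leaves implicit, but it does not change the substance of the argument.
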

However, Corollary \ref{cor: inclusion of feasible regions} does not tell us anything about the feasibility of the projected problem nor the boundness of the projected dual. 
The projected problem may be infeasible while the original problem is feasible. 
However, as we will see in Section \ref{sec: feasibility and optimality bound}, we can bound the $\epsilon$-feasibility of the projected problem with respect to the original problem. 
In other words, we can bound the probability of the two feasible regions being close to each other, where closeness is defined in terms of some tolerance $\epsilon$. 
Moreover, assuming that the bounds of the dual variables are known, problem \eqref{primal} can be reformulated to ensure the feasibility of the projection. 
But before presenting the main feasibility and optimality results, we need to introduce some of the main properties of sparse sub-gaussian random projections.

\section{Properties of sparse sub-gaussian random projections} \label{sec: properties sub-gaussian RPs}

In this section, we provide some background on some properties of sparse sub-gaussian random projections that will be used throughout the rest of the paper. 
Even though all the properties presented here also hold for the classic Gaussian random projections and have been widely discussed in papers such as \cite{DAmbrosio2020, Vu2018, Poirion2023, Liberti2021, Liberti2023}, we update all results so that the sparsity of the new sparse sub-gaussian random-projector appears in all the bounds, including the feasibility and optimality bounds. 
The main motivation behind this is that our computational experiments have shown that for practical applications the sparsity of the projector highly influences both the feasibility of the projected problem and the quality of the projection. 

There are many examples of sparse sub-gaussian projectors that can be used to project MPs and ML problems. For example, \cite{Ailon2009, Matouek2008} propose to sample the entries of $\projector$ independently where each entry is 0 with probability $1-\gamma$, and either $\sqrt{\gamma}$ or $-\sqrt{\gamma}$ with probability $\frac{\gamma}{2}$ each. 
The authors of \cite{DAmbrosio2020} propose a new sparse sub-gaussian projector $\projector \in \reals{\dimprojector \times \dimoriginal}$, where each entry of $\projector$ is sampled from a normal distribution $N(0,\frac{1}{\sqrt{\gamma \dimprojector}})$ with probability $\gamma$, and is $0$ with probability $1-\gamma$. 
In \cite{DAmbrosio2020} the authors show that this projector is indeed a random projection since all the rows are sub-gaussian isotropic with zero mean. 
However, the optimality and feasibility bounds presented in \cite{DAmbrosio2020} are not updated to take the sparsity parameter into account. 
In the rest of this section, we focus on this last sparse projector and propose new probabilistic bounds that depend on the sparsity parameter $\gamma$. 
To the best of our knowledge, this is the first paper that explicitly includes the sparsity of the projector in the probabilistic bounds. Let us start by updating Lemma 3.1 in \cite{Vu2019ball}.

\begin{lemma} \label{lemma: sub-gaussian spectral bound}
    Let $\projector \in \reals{\dimprojector \times \dimoriginal}$ be a sparse sub-gaussian matrix in which each component is sampled from a normal distribution N$(0, \frac{1}{\sqrt{\dimprojector \gamma}})$ with probability $\gamma \in (0,1)$ and is $0$ with probability $1-\gamma$. For $\epsilon \in (0,1)$ we have that
    \begin{equation*}
        \|\frac{1}{\dimprojector} \projector \projectorT - I_{\dimprojector}\|_2 \leq \epsilon
    \end{equation*}
    with probability at least $1-2e^{-(\frac{\sqrt{\epsilon} \gamma \dimprojector}{\mainconstant} - \sqrt{k})^2}$, 
    where $\| \cdot \|_2$ denotes the spectral norm and $\mainconstant$ is some universal constant.
\end{lemma}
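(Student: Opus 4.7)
My plan is to follow the argument of Lemma 3.1 in \cite{Vu2019ball}, but to carry the sparsity parameter $\gamma$ through the estimates rather than absorbing it into a universal constant as is done in the dense Gaussian or $\pm 1$ settings. The backbone is the standard non-asymptotic concentration inequality for the extreme singular values of a matrix whose (rescaled) rows are independent sub-gaussian isotropic random vectors.

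The first step is to compute the Orlicz $\psi_2$-norm of a single entry of $\projector$. Writing this entry as $\xi_{ij} Z_{ij}$ with $\xi_{ij} \sim \mathrm{Bernoulli}(\gamma)$ and $Z_{ij} \sim N(0, 1/(\dimprojector\gamma))$ independent, a direct moment-generating-function computation yields $\subgausnorm{\projector_{ij}} \leq c_0/\sqrt{\dimprojector\gamma}$ for some universal $c_0$. After rescaling by $\sqrt{\dimprojector}$ to restore unit entry-variance, the columns of $\sqrt{\dimprojector}\,\projector$ become sub-gaussian isotropic random vectors in $\reals{\dimprojector}$ with sub-gaussian constant $K \leq c_1/\sqrt{\gamma}$. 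This is the place where $\gamma$ is pulled out into the open, rather than being swallowed by the constant of the underlying singular-value estimate.

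Next I would apply the standard non-asymptotic singular-value concentration inequality (as invoked in \cite{Vu2019ball}) to the rescaled matrix $\sqrt{\dimprojector}\,\projectorT \in \reals{\dimoriginal \times \dimprojector}$, whose rows are those isotropic columns: for every $t \geq 0$, with probability at least $1 - 2\exp(-t^2)$,
\[
    \bigl|\, s_i\bigl(\sqrt{\dimprojector}\,\projectorT\bigr) - \sqrt{\dimoriginal}\,\bigr| \;\leq\; \mainconstant\, K^2\, \bigl(\sqrt{\dimprojector} + t\bigr) \quad \text{for all } i = 1, \dots, \dimprojector.
\]
Squaring and re-normalising translates this singular-value control into a spectral-norm bound on $\frac{1}{\dimprojector}\projector\projectorT - I_{\dimprojector}$, so that the target inequality $\|\frac{1}{\dimprojector}\projector\projectorT - I_{\dimprojector}\|_2 \leq \epsilon$ is implied by a deterministic condition of the form $\mainconstant K^2(\sqrt{\dimprojector}+t) \leq \sqrt{\epsilon}\,\dimprojector$. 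Substituting the bound $K^2 \leq c_1^2/\gamma$ and solving for the largest admissible $t$ gives $t \leq \sqrt{\epsilon}\,\gamma\,\dimprojector/\mainconstant - \sqrt{\dimprojector}$ after absorbing the auxiliary constants into $\mainconstant$; feeding this $t$ back into $1-2\exp(-t^2)$ produces the stated probability.

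The main obstacle is the bookkeeping of the various scalings — the entry variance $1/(\dimprojector\gamma)$, the rescaling by $\sqrt{\dimprojector}$ needed to reach isotropy, the $1/\dimprojector$ normalisation inside $\frac{1}{\dimprojector}\projector\projectorT$, and the $K^2 \propto 1/\gamma$ dependence — which must interlock so that $\gamma$ ends up multiplying $\dimprojector$ and $\sqrt{\epsilon}$ linearly inside the square of the exponent, rather than being hidden in $\mainconstant$ as in the original dense version.
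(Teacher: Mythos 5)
Your proposal follows essentially the same route as the paper's own proof: bound the $\psi_2$-norm of a single Bernoulli--Gaussian entry so that the sub-gaussian constant $K$ carries the explicit $\gamma$-dependence, invoke Vershynin's two-sided concentration bound for matrices with independent sub-gaussian isotropic rows (Theorem 4.6.1 / its singular-value form), and solve the resulting deterministic condition for $t$ to obtain the exponent $\bigl(\sqrt{\epsilon}\,\gamma k/\mathcal{C} - \sqrt{k}\bigr)^2$. The only difference is bookkeeping: you apply the concentration inequality to the rescaled transpose $\sqrt{k}\,P^{\top}$ with $K^2 \lesssim 1/\gamma$ and a linear error term, whereas the paper applies it directly to the rows of $P$ (citing the isotropy result of D'Ambrosio et al.) with $K \lesssim \sqrt{n}/(k\gamma)$ and works through the $\delta^2$ branch of $\max(\delta,\delta^2)$ --- both variants land on the same final probability.
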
 
\begin{proof}
By Proposition 3 in \cite{DAmbrosio2020} we know that $\projector$ has independent, mean zero, sub-gaussian isotropic rows. Then, we can apply Theorem 4.6.1 in \cite{Vershynin2018}, which states that for any $t\geq0$ 
\begin{equation} \label{equation sub-gaussian HDP}
    \twonorm{\frac{1}{m} \projector \projectorT - I_{\dimprojector}} \leq K^2\max(\delta, \delta^2) 
\end{equation}
with probability at least $1-2e^{-t^2}$ where $K= \max_i \subgausnorm{\projector_i}$, $\subgausnorm{{}\cdot{}}$ denotes the vector sub-gaussian norm for row $i$, and $\delta = C_0 \left(\sqrt{\frac{\dimprojector}{\dimoriginal}} + \frac{t}{\sqrt{\dimoriginal}}\right)$ for some constant $C_0$. Our first goal is to bound $K$. 
Recall that all entries of $P$ are independently drawn from a probability distribution obtained by multiplying a Bernoulli random variable (RV) and a normal RV, i.e. $ P_{ij} \sim B(\gamma)N(0, \frac{1}{\sqrt{\dimprojector\gamma}})$ with Bernoulli parameter $\gamma$ and standard deviation $\frac{1}{\sqrt{\dimprojector\gamma}}$. 
Since all row vectors are drawn independently from the same distribution, we have that $K=\subgausnorm{\projector_i}$ for any row $\projector_i$ of $\projector$. Following this, and by the definition of the sub-gaussian norm of a random vector, we have that
\begin{align}
    K=\|\projector_i \|_{\psi_2} &= \sup_{x\in S^{n-1}} \subgausnorm{\langle \projector_{i}, x \rangle} \\
    & \leq \max_j C_1 \subgausnorm{\projector_{ij}} \label{spectral constant} \\
    & = C_1 \subgausnorm{\projector_{ik}} \text{ for any element }k \text{ of }P_i   \label{spectral single subnorm}
\end{align}
where $S^{n}$ denotes the unit $n$-sphere and $C_1$ is a universal constant. Line \eqref{spectral constant} follows from Lemma 3.4.2 in \cite{Vershynin2018}, while line \eqref{spectral single subnorm} follows again from the fact that all the entries of $\projector$ are independently sampled from the same distribution. 
Therefore, we only need to bound the sub-gaussian norm of any of the entries of $P$. 
To do this, we prove that the sub-gaussian norm of the multiplication of a Bernoulli distribution with parameter $\gamma$ and a normal distribution with mean 0 and standard deviation $\sigma$ is bounded above by the sub-gaussian norm of the normal distribution, which is known to be bounded by $C_2\sigma$ for some constant $C_2$. 
For this, consider a random variable $Z=XY$ where $X, Y$ are distributed as $X \sim B(\gamma)$ with bernoulli parameter $\gamma$, and $Y \sim N(0,\sigma^2)$ with variance $\sigma^2$. 
Now, we can apply the law of total expectation and get that for a function $g(Z)$:
\begin{align}
        \expectedvalue{g(Z)} & = \expectedvalue{g(Z) | X=0} \probability{X=0} + \expectedvalue{g(Z) | X=1} \probability{X=1} \\
        & = (1 - \gamma) \expectedvalue{g(Z) | X=0} + \gamma \expectedvalue{g(Z) | X=1} \\
        & = (1 - \gamma) g(0) + \gamma \expectedvalue{g(Y)} \label{spectral: combined-normal}
\end{align}
where line \eqref{spectral: combined-normal} follows from the fact that when $X=0$, then $Z=0$ as well and the expected value is just the function evaluated at 0. Now, when $X=1$, $Z$ is equivalent to being drawn from the normal distribution, so the expectation of $Z$ is equal to the expectation of $Y$. 
Then, by the definition of sub-gaussian norm, we have that:
\begin{align}
    \subgausnorm{\projector_{ij}} & = \inf \left\{t>0 : \expectedvalue{\text{exp}(Z^2/t^2)} \leq 2\right\} \\
    & = \inf \left\{ t>0 : (1 - \gamma) \text{exp}(0^2/t^2) + \gamma \expectedvalue{\text{exp}(Y^2/t^2)} \leq 2 \right\} \label{conditional sub-gaussian norm}\\
    & = \inf \left\{ t>0 : \expectedvalue{\text{exp}(Y^2/t^2)} \leq \frac{1 + \gamma}{\gamma} \right\} \label{spectral new rhs} \\
    & \leq C_2 \sigma \label{spectral bounded normal}
\end{align}
where line \eqref{conditional sub-gaussian norm} follows from applying \eqref{spectral: combined-normal} to $g(Z) = e^{\frac{Z^2}{t^2}}$, line \eqref{spectral bounded normal} follows from the fact that, as $\gamma \in (0,1)$, the right hand side of \eqref{spectral new rhs} is in the interval $(2, \infty)$, 
making the norm of $Z$ at most as large as that of $Y$.
Since the sub-gaussian norm of a normal distribution is bounded by $C_2\sigma$ for some universal constant $C_2$ and variance $\sigma$, we obtain the bound of line \eqref{spectral bounded normal}. 
Following this, substituting in the standard deviation of the normal distribution of the entries of $\projector$, we have that $K \leq C_3 \frac{1}{\dimprojector \gamma} \leq C_3 \frac{\sqrt{n}}{\dimprojector \gamma}$. 

We also need to analyse the max operator involving $\delta$ in \eqref{equation sub-gaussian HDP}. 
Consider the term $\delta = C_0 \left(\sqrt{\frac{\dimprojector}{\dimoriginal}} + \frac{t}{\sqrt{\dimoriginal}}\right)$. From the proof of Theorem 4.6.1 in \cite{Vershynin2018} we know that the constant $C_0$ in $\delta$ is the constant appearing in the sub-exponential norm of the sub-exponential random variable $\langle \projector_i, x \rangle^2 - 1$, which is bounded below by $C_0\geq 1$ (this bound follows from the proof of the centering of sub-exponential random variables). 
For $C_0$ large enough, $C_0$ dominates the expression $\delta = C_0 \left(\sqrt{\frac{\dimprojector}{\dimoriginal}} + \frac{t}{\sqrt{\dimoriginal}}\right)$ where $0 \leq \left(\sqrt{\frac{\dimprojector}{\dimoriginal}} + \frac{t}{\sqrt{\dimoriginal}}\right) \leq 1$, which implies that $\max(\delta, \delta^2) = \delta^2$. Following this, we have that with probability $1-2e^{-t^2}$
\begin{align}
    \|\frac{1}{m}AA^{\top} - I_{\dimprojector} \| & \leq K^2\max(\delta, \delta^2) \\
    & = K^2 C_0^2 \left(\sqrt{\frac{\dimprojector}{\dimoriginal}} + \frac{t}{\sqrt{\dimoriginal}}\right)^2 \\
     & \leq \frac{C_3^2 C_0^2 n}{\dimprojector^2 \gamma^2} \left(\sqrt{\frac{\dimprojector}{\dimoriginal}} + \frac{t}{\sqrt{\dimoriginal}}\right)^2 \\
    & = \frac{\mainconstant (\sqrt{\dimprojector} + t )^2}{\dimprojector^2 \gamma^2}
\end{align}
for some universal constant $\mainconstant$. This means that with probability at least $1-2e^{-(\frac{\sqrt{\epsilon} \gamma \dimprojector}{\mainconstant} - \sqrt{k})^2}$ we have
\begin{align}
    \twonorm{\frac{1}{\dimprojector}\projector \projectorT - I_{\dimprojector}} & \leq \epsilon
\end{align}
as desired. 
\end{proof}
Following this, we can redefine the classic Johnson-Lindestrauss Lemma for sub-gaussian projections taking into account the sparsity parameter. 
\begin{lemma}[JLL for Sparse Sub-Gaussian Projections]\label{lemma: jll sub-gaussian}
Let $Q \in \reals{\dimprojector \times \dimoriginal}$ be a sparse sub-gaussian matrix in which each component is sampled from a normal distribution N$(0, \frac{1}{\sqrt{\dimprojector \gamma}})$ with probability $\gamma \in (0,1)$ and is $0$ with probability $1-\gamma$ and let $\projector=\frac{1}{\sqrt{\dimprojector}} Q$. For any $\epsilon \in (0,1)$ and for any finite set $T \subseteq \reals{\dimoriginal}$ with $m$ points
\begin{equation}
    (1-\epsilon)\|x - y\|_2 \leq \|\projector x - \projector y\|_2 \leq (1+\epsilon)\|x - y\|_2
\end{equation}
with probability at least $1-2e^{-(\mainconstant\epsilon \sqrt{\dimprojector^5} \gamma^2 - \sqrt{\log m})^2}$ for some universal constant $\mathcal{C}$.
\end{lemma}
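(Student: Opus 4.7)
I would prove this by the standard Johnson--Lindenstrauss reduction: establish a single-vector concentration statement for $\|\projector u\|^2$ first, then apply a union bound over the $\binom{m}{2}$ pairwise differences $u = x - y$ with $x,y \in T$.

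For the single-vector step, fix $u \in \reals{\dimoriginal}$ and rewrite $\|\projector u\|^2 = \frac{1}{\dimprojector}\sum_{i=1}^{\dimprojector}\langle Q_i, u\rangle^2$, where $Q_i$ is the $i$-th row of the underlying sparse sub-gaussian matrix $Q$. The $\dimprojector$ summands are i.i.d., centred, and sub-exponential, and the bound on their sub-exponential norm is exactly the entry-level bookkeeping carried out in the proof of Lemma \ref{lemma: sub-gaussian spectral bound}: the sub-gaussian norm of each entry of $Q$ is at most $C_2/\sqrt{\dimprojector \gamma}$, so the sub-exponential norm of $\langle Q_i, u\rangle^2$ scales as $\|u\|^2/(\dimprojector \gamma)$. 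Plugging this into Bernstein's inequality for sums of i.i.d.\ sub-exponentials yields a bound of the form
\[
    \probability{\big|\|\projector u\|^2 - \|u\|^2\big| > \epsilon \|u\|^2} \le 2\exp\bigl(-(\mainconstant\epsilon\sqrt{\dimprojector^{5}}\,\gamma^{2})^{2}\bigr),
\]
with the sparsity dependence inherited through the sub-exponential norm of the summands. Equivalently, the same estimate can be obtained by specialising Lemma \ref{lemma: sub-gaussian spectral bound} to the rank-one direction spanned by $u$.

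For the union step, there are at most $m^{2}$ pairs $(x,y)\in T\times T$; applying the single-vector bound to each difference $u = x-y$ and summing failure probabilities gives an overall failure bound of $2m^{2}\exp(-(\mainconstant\epsilon\sqrt{\dimprojector^{5}}\,\gamma^{2})^{2})$. Writing $m^{2} = \exp(2\log m)$ and completing the square in the exponent — exactly the manoeuvre already used at the end of Lemma \ref{lemma: sub-gaussian spectral bound} to absorb the Vershynin free parameter $t$ into $\sqrt{\dimprojector}$ — rewrites this as $2\exp(-(\mainconstant\epsilon\sqrt{\dimprojector^{5}}\,\gamma^{2}-\sqrt{\log m})^{2})$, after relabelling $\mainconstant$ to absorb harmless constants. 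Taking square roots finally converts the quadratic two-sided inequality into the linear form stated in the lemma (with a mild redefinition of $\epsilon$ that can again be hidden in $\mainconstant$).

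The main obstacle is the single-vector step: propagating the sparsity parameter $\gamma$ through the sub-exponential tail bound without losing powers of $\gamma$ or $\dimprojector$. The matrix-level propagation is already worked out in the proof of Lemma \ref{lemma: sub-gaussian spectral bound}, so the cleanest route is probably to restrict that proof to the $1\times\dimoriginal$ ``matrix'' $u^{\top}$ and import the constants wholesale, rather than redoing the Bernstein bookkeeping from scratch. I expect this to be the delicate step because the target exponent carries the unusual factor $\dimprojector^{5/2}\gamma^{2}$ rather than the $\sqrt{\dimprojector}$ one would read off from a naive scalar Bernstein bound, and matching this exact power is the technical point most worth triple-checking.
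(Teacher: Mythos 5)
Your high-level route is genuinely different from the paper's. The paper never performs a union bound over pairs: it forms the set $T$ of normalised differences $(x-y)/\twonorm{x-y}$ and applies Vershynin's matrix deviation inequality (Theorem 9.1.1 / Exercise 9.1.8) uniformly over $T$, so the $\sqrt{\log m}$ term enters directly as (a bound on) the Gaussian width of $T$, multiplied by $K^2=\big(\max_i\subgausnorm{\projector_i}\big)^2$, and the subtractive exponent $\big(\mainconstant\epsilon\sqrt{\dimprojector^5}\gamma^2-\sqrt{\log m}\big)^2$ appears natively rather than via $m^2=e^{2\log m}$ and completing the square. Your scheme (single-vector concentration plus union bound) is a legitimate alternative: the completion-of-the-square step does recover the stated form after shrinking $\mainconstant$, precisely in the regime $\mainconstant\epsilon\sqrt{\dimprojector^5}\gamma^2\gtrsim\sqrt{\log m}$ where the bound is non-vacuous (the same regime in which the paper's own derivation is valid, since it needs $u\ge 0$), and the passage from squared norms to norms is immediate since $\sqrt{1-\epsilon}\ge 1-\epsilon$ and $\sqrt{1+\epsilon}\le 1+\epsilon$, with no redefinition of $\epsilon$ needed. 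Your single-vector statement is essentially the paper's Lemma \ref{lemma: squared result}, which is stated and proved \emph{after} this lemma, so you would indeed have to prove it yourself, as you propose; there is no circularity issue.

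The concrete gap is the quantitative claim in your single-vector step. With the entry-level bound $\subgausnorm{Q_{ij}}\le C/\sqrt{\dimprojector\gamma}$ that you state, the sub-exponential norm of $\langle Q_i,u\rangle^2$ is of order $\twonorm{u}^2/(\dimprojector\gamma)$, and Bernstein's inequality for the average of $\dimprojector$ such summands gives a tail exponent of order $\epsilon^2\dimprojector^3\gamma^2$, not the $\big(\mainconstant\epsilon\sqrt{\dimprojector^5}\gamma^2\big)^2=\mainconstant^2\epsilon^2\dimprojector^5\gamma^4$ you display; the bound you write down does not follow from the bookkeeping that precedes it. The $\sqrt{\dimprojector^5}\gamma^2$ factor in the statement does not come from entry-level norms at all: it comes from the row-level bound $K=\max_i\subgausnorm{\projector_i}\le C/(\dimprojector\gamma)$ established in the proof of Lemma \ref{lemma: sub-gaussian spectral bound}, whose square $K^2\le C/(\dimprojector^2\gamma^2)$ multiplies $w(T)+u\cdot\mathrm{rad}(T)$ in the deviation inequality and, after the $1/\sqrt{\dimprojector}$ rescaling, yields $\gamma^2\sqrt{\dimprojector^5}$ in the exponent. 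So to land on the stated exponent you must take the fallback you yourself suggest — apply Exercise 9.1.8 to the singleton set $\{u\}$ (so $w=0$, $\mathrm{rad}=\twonorm{u}$) and import $K$ from Lemma \ref{lemma: sub-gaussian spectral bound} — which is exactly how the paper proves Lemma \ref{lemma: squared result}; the Bernstein variant, as sketched, produces a different power of $\dimprojector$ and $\gamma$ and therefore does not reproduce the statement as written. You correctly flagged this as the step to triple-check, but as it stands it is asserted rather than derived.
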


\begin{proof}
    The proof is similar to that of the sub-gaussian version of the JLL from \cite{Vershynin2018} section 9.3.1. Let $\mathcal{X}$ be a set of points in $\reals{\dimoriginal}$ and define $T$ to be the set of normalised differences between elements of $\mathcal{X}$:
    \begin{equation} \label{normalised differences}
        T = \left\{\frac{x-y}{\twonorm{x-y}} : x,y \in \mathcal{X}, x\neq y \right\}
    \end{equation}
    Recall from \cite{DAmbrosio2020} that the rows of $Q$ are independent, isotropic and sub-gaussian random vectors in \reals{\dimoriginal}. 
    By the matrix deviation inequality (Theorem 9.1.1 in \cite{Vershynin2018}), for any subset $T \in \reals{\dimoriginal}$
    \begin{equation} \label{detailed expectation JLL proof}
        \expectedvalue{\sup_{x \in T} \left| \twonorm{Q x} - \sqrt{\dimprojector} \twonorm{x} \right| } \leq C_0K^2 \gamma(T)
    \end{equation}
    where $K= \max_i \subgausnorm{\projector_i}$, $C_0$ is a universal constant, and $\gamma(T)$ denotes the gaussian complexity of $T$. Now, by Exercise 9.1.8 in \cite{Vershynin2018}, we also know that for any $u \geq 0$, the event 
    \begin{equation} \label{detailed probability JLL proof}
        \sup_{x \in T} \left| \twonorm{Q x } - \sqrt{\dimprojector}\twonorm{x} \right| \leq C_0 K^2 [w(T) + u \cdot \text{rad}(T)]
    \end{equation}
    holds with probability at least $1-2e^{-u^2}$, where $\text{rad}(T)$ is the radius of $T$, i.e. $\text{rad}(T)= \sup_{x \in T} \twonorm{x}$. 
    From the proof of Lemma \ref{lemma: sub-gaussian spectral bound} we know that $K$ is bounded by $K \leq C_1 \frac{1}{\dimprojector \gamma}$ for some constant $C_1$.
    Now, applying equation \eqref{detailed probability JLL proof} to our set $T$ in \eqref{normalised differences}, together with the bound on $K$, we have that 
    \begin{align}
        \sup_{x \in T} \left| \frac{\twonorm{Q x - Q y}}{\twonorm{x - y}} - \sqrt{\dimprojector} \right| & \leq C_2 \frac{1}{\dimprojector^2 \gamma^2} [w(T) + u \cdot \text{rad}(T)]
    \end{align}
    Multiplying both sides by $\frac{1}{\sqrt{\dimprojector}}\twonorm{x - y}$ and rearranging we get that, for a matrix $\projector = \frac{1}{\sqrt{\dimprojector}} Q$, with probability at least $1 - 2e^{-u^2}$:
    \begin{align}
        \left| \twonorm{\frac{1}{\sqrt{\dimprojector}} Q x - \frac{1}{\sqrt{\dimprojector}} Q y } - \twonorm{x - y} \right| & \leq \frac{C_2 \sqrt{\log m} + C_0u}{\gamma^2 \sqrt{\dimprojector^5}} \twonorm{x -y }
    \end{align}
    By letting $\epsilon \leq \frac{C_2 \sqrt{\log m} + C_0u}{\gamma^2 \sqrt{\dimprojector^5}}$, we have that $\projector$ is an approximate isometry on $\mathcal{X}$ , i.e.
    \begin{equation}
        (1-\epsilon) \twonorm{x - y} \leq \twonorm{\projector x - \projector y} \leq (1+\epsilon) \twonorm{x - y} 
    \end{equation}
    with probability at least $1-2e^{-(\mainconstant \epsilon \sqrt{\dimprojector^5} \gamma^2 - \sqrt{\log m})^2}$ for some constant $\mathcal{C}$, as desired. 
\end{proof}

We can also update the result in \eqref{basic RP} for the sparse sub-gaussian projector. 

\begin{lemma}\label{lemma: squared result}
Let $\projector \in \reals{\dimprojector \times \dimoriginal}$ be a sparse sub-gaussian matrix in which each component is sampled from a normal distribution N$(0, \frac{1}{\sqrt{\dimprojector \gamma}})$ with probability $\gamma \in (0,1)$ and is $0$ with probability $1-\gamma$ and let $Q=\frac{1}{\sqrt{\dimprojector}} \projector$. For any $x \in \mathbb{R}^\dimoriginal$ and $\epsilon \in (0,1)$ there is a universal constant $\mainconstant$ such that
\begin{equation}\label{basic result}
    (1-\epsilon)\|x\|_2^2 \leq \|\projector x\|_2^2 \leq (1+\epsilon)\|x\|_2^2
\end{equation}
with probability at least  $1-2e^{-\mainconstant \epsilon^2 k^5 \gamma^4}$.
\end{lemma}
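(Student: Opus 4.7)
The plan is to obtain Lemma \ref{lemma: squared result} as a direct corollary of Lemma \ref{lemma: jll sub-gaussian}, followed by an algebraic step to pass from a linear to a squared bound. Apply Lemma \ref{lemma: jll sub-gaussian} to the two-point set $\mathcal{X} = \{0, x\} \subset \reals{n}$. The normalised-difference set $T$ then consists of the single unit vector $x/\|x\|_2$, and $m = 2$, so the lemma gives, for any $\epsilon' \in (0,1)$,
\[
(1-\epsilon')\|x\|_2 \leq \|\projector x\|_2 \leq (1+\epsilon')\|x\|_2,
\]
with probability at least $1 - 2\exp(-(\mainconstant \epsilon'\sqrt{\dimprojector^5}\gamma^2 - \sqrt{\log 2})^2)$.

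Next I would square both inequalities and use the elementary bounds $(1-\epsilon')^2 \geq 1 - 2\epsilon'$ and $(1+\epsilon')^2 \leq 1 + 3\epsilon'$, valid for $\epsilon' \in (0,1)$. Setting $\epsilon' := \epsilon/3$ then gives
\[
(1-\epsilon)\|x\|_2^2 \leq \|\projector x\|_2^2 \leq (1+\epsilon)\|x\|_2^2,
\]
which is the desired two-sided inequality, still holding with the same probability (but with $\epsilon'$ replaced by $\epsilon/3$ in the exponent).

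The remaining task is to show that the exponent $-(\mainconstant (\epsilon/3)\sqrt{\dimprojector^5}\gamma^2 - \sqrt{\log 2})^2$ can be rewritten as $-\mainconstant'\,\epsilon^2 \dimprojector^5 \gamma^4$ for some universal constant $\mainconstant'$. In the regime where the leading term $\mainconstant \epsilon\sqrt{\dimprojector^5}\gamma^2$ dominates the constant $\sqrt{\log 2}$ (which is precisely the nontrivial regime for this lemma), we have $(\mainconstant \epsilon'\sqrt{\dimprojector^5}\gamma^2 - \sqrt{\log 2})^2 \geq \mainconstant'\,\epsilon^2 \dimprojector^5 \gamma^4$ for a rescaled universal constant, yielding the claimed probability.

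I expect the final constant-absorption step to be the only real subtlety: strictly speaking one must either restrict to the regime in which the leading term dominates, or use the inequality $(a-b)^2 \geq a^2/2 - b^2$ and then show that the additive $b^2 = \log 2$ term can be absorbed into a suitable rescaling of $\mainconstant'$. Both routes are routine, and neither affects the $\epsilon^2 \dimprojector^5 \gamma^4$ scaling that is the genuinely informative content of the lemma.
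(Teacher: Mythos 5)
Your proposal is correct, but it takes a different route from the paper. The paper does not go through Lemma \ref{lemma: jll sub-gaussian} at all: it applies the high-probability form of the matrix deviation inequality (Exercise 9.1.8 in Vershynin) directly to the singleton set $T=\{x\}$, where the Gaussian width term vanishes, so the bound $\bigl|\twonorm{\projector x}-\sqrt{\dimprojector}\twonorm{x}\bigr|\leq C_0K^2u\twonorm{x}$ holds with probability exactly $1-2e^{-u^2}$; substituting $K\lesssim 1/(\dimprojector\gamma)$ and choosing $u$ proportional to $\epsilon\gamma^2\sqrt{\dimprojector^5}$ gives the stated probability $1-2e^{-\mainconstant\epsilon^2\dimprojector^5\gamma^4}$ with no residual logarithmic term, after which the same squaring/$\epsilon/3$ step you use finishes the proof. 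You instead specialize the already-proved Lemma \ref{lemma: jll sub-gaussian} to the two-point set $\{0,x\}$, which is legitimate (that lemma precedes this one and does not depend on it), but it imports the $\sqrt{\log 2}$ offset that the paper's singleton argument avoids, and absorbing it is slightly lossier than you suggest: $(a-b)^2\geq a^2/2-b^2$ turns the failure probability into $4e^{-a^2/2}$ rather than $2e^{-a^2/2}$, so to recover the exact form $1-2e^{-\mainconstant\epsilon^2\dimprojector^5\gamma^4}$ one must additionally halve the constant in the exponent and observe that in the remaining small-exponent regime the claimed bound is vacuous. You flagged this as the subtle step and both of your proposed fixes do go through, so there is no gap; the trade-off is that your argument is shorter given Lemma \ref{lemma: jll sub-gaussian}, while the paper's argument is self-contained at the level of the deviation inequality and clean in the constants. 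One further remark: your conclusion, like the paper's own proof, actually bounds the norm of the rescaled matrix ($\frac{1}{\sqrt{\dimprojector}}$ times the raw sparse matrix), whereas the lemma statement writes $\|\projector x\|_2^2$ with the roles of $\projector$ and $Q$ swapped relative to Lemma \ref{lemma: jll sub-gaussian}; this notational mismatch is in the paper itself and is not a defect of your argument.
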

\begin{proof}
    By Exercise 9.1.8 in \cite{Vershynin2018}, we know that for any $u \geq 0$ the event
    \begin{equation}
        \sup_{x \in T} \left| \twonorm{\projector x } - \sqrt{\dimprojector}\twonorm{x} \right| \leq C_0 K^2 [w(T) + u \cdot \text{rad}(T)]
    \end{equation}
    holds with probability at least $1-2e^{-u^2}$, where $\text{rad}(T)$ is the radius of $T$. 
    Now, if $T$ has only one vector $x$, then we have that  
    \begin{align}
        \left| \twonorm{\projector x } - \sqrt{\dimprojector}\twonorm{x} \right| & \leq C_0 K^2 [w(T) + u \cdot \twonorm{x}] \\
        & \leq C_0 K^2 u \twonorm{x} \\
        &  \leq \frac{C_0 u}{\dimprojector^2 \gamma^2}  \twonorm{x}
    \end{align}
    since the gaussian width of $T$ is bounded above by the gaussian complexity, which is bounded above by $C\sqrt{\log 1} = 0$. 
    By letting $Q = \frac{1}{\sqrt{\dimprojector}} \projector$ and multiplying both sides by $\frac{1}{\sqrt{\dimprojector}}$, we have that 
    \begin{equation}
        \left| \twonorm{Q x } - \twonorm{x} \right|  \leq \frac{C_0u}{\gamma^2 \sqrt{\dimprojector^5}}  \twonorm{x}.
    \end{equation}
    Following this, we have that for all $x \in \reals{\dimoriginal}$ and $\epsilon \in (0,1)$
    \begin{equation}
        (1-\epsilon) \twonorm{x} \leq \twonorm{Q x } \leq (1+\epsilon)  \twonorm{x}.
    \end{equation}
    with probability at least $1-2e^{-\frac{\epsilon^2 k^5 \gamma^4}{C_0}}$. Now, squaring both sides we get that 
    \begin{equation}
        (1-\epsilon)^2 \twonorm{x}^2 \leq \twonorm{Q x }^2 \leq (1+\epsilon)^2  \twonorm{x}^2.
    \end{equation}
    with probability at least $1-2e^{-\frac{\epsilon^2 k^5 \gamma^4}{C_0}}$ and since $1 + 2\epsilon + \epsilon^2 \leq 1 + 3\epsilon$ and $1 - 2\epsilon + \epsilon^2 \geq 1 - 3\epsilon$, we have that 
    \begin{equation}
        (1-\epsilon)  \twonorm{x}^2 \leq (1-\epsilon/3)^2 \twonorm{x}^2 \leq \twonorm{Q x }^2 \leq (1+\epsilon / 3)^2  \twonorm{x}^2 \leq \twonorm{Q x }^2 \leq (1+\epsilon)  \twonorm{x}^2.
    \end{equation}
    with probability at least $1-2e^{-\frac{\epsilon^2 k^5 \gamma^4}{C_0}} = 1-2e^{-\mainconstant \epsilon^2 k^5 \gamma^4}$, as desired.
\end{proof}
We can now use these results to update some of the other peoperties of RPs, such as the invariance of inner products and quadratic forms.

\begin{lemma}\label{lemma: sub-gaussian vector inner product}
    Let $\projector \in \reals{\dimprojector \times \dimoriginal}$ be a sparse sub-gaussian random projection such as that of Lemma \ref{lemma: squared result} and let $\epsilon \in (0,1)$. Then there is a universal constant $\mainconstant$ such that, for any vectors $x,y \in \reals{\dimoriginal}$
    \begin{equation*}
        \langle x, y \rangle - \epsilon \|x\|_2\|y\|_2 \leq \langle \projector x, \projector y \rangle \leq \langle x, y \rangle + \epsilon \|x\|_2\|y\|_2
    \end{equation*}
with probability at least $1-4e^{-\mainconstant \epsilon^2 k^5 \gamma^4}$. 
\end{lemma}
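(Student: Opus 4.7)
The plan is to use the polarization identity
\[
\langle u, v \rangle \;=\; \tfrac{1}{4}\bigl(\|u+v\|_2^2 - \|u-v\|_2^2\bigr)
\]
applied to both the original inner product $\langle x, y\rangle$ and the projected inner product $\langle \projector x, \projector y\rangle$, and then invoke Lemma \ref{lemma: squared result} to control the two squared norms $\|\projector(x\pm y)\|_2^2$ in terms of $\|x\pm y\|_2^2$.

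First I would dispose of the degenerate case: if $\|x\|_2 = 0$ or $\|y\|_2 = 0$, both sides vanish and the claim is trivial. Otherwise, rescale and set $x' = x/\|x\|_2$, $y' = y/\|y\|_2$, noting that $\langle \projector x, \projector y\rangle - \langle x, y\rangle = \|x\|_2\|y\|_2\bigl(\langle \projector x', \projector y'\rangle - \langle x', y'\rangle\bigr)$. So it suffices to prove the result for unit vectors with the right-hand bound $\epsilon$.

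Next I would apply Lemma \ref{lemma: squared result} twice, once to the vector $u = x'+y'$ and once to $v = x'-y'$, each with parameter $\epsilon$. Each event
\[
(1-\epsilon)\|u\|_2^2 \;\leq\; \|\projector u\|_2^2 \;\leq\; (1+\epsilon)\|u\|_2^2,
\qquad
(1-\epsilon)\|v\|_2^2 \;\leq\; \|\projector v\|_2^2 \;\leq\; (1+\epsilon)\|v\|_2^2
\]
holds with probability at least $1-2e^{-\mainconstant \epsilon^2 k^5 \gamma^4}$, so by a union bound both hold simultaneously with probability at least $1 - 4e^{-\mainconstant \epsilon^2 k^5 \gamma^4}$. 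On this joint event, subtracting and using the polarization identity gives
\[
\bigl|4\langle \projector x', \projector y'\rangle - 4\langle x', y'\rangle\bigr|
\;\leq\; \epsilon\bigl(\|x'+y'\|_2^2 + \|x'-y'\|_2^2\bigr)
\;=\; 2\epsilon\bigl(\|x'\|_2^2 + \|y'\|_2^2\bigr) \;=\; 4\epsilon,
\]
where the last step uses the parallelogram identity and $\|x'\|_2 = \|y'\|_2 = 1$. Dividing by $4$ and multiplying back by $\|x\|_2\|y\|_2$ recovers
\[
\bigl|\langle \projector x, \projector y\rangle - \langle x, y\rangle\bigr| \;\leq\; \epsilon\,\|x\|_2\|y\|_2,
\]
which is exactly the two-sided inequality in the statement.

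No real obstacle arises here — the polarization identity plus Lemma \ref{lemma: squared result} mechanically delivers the result. The only subtlety worth being careful about is the normalization step: if one applies Lemma \ref{lemma: squared result} directly to $x\pm y$ without first reducing to unit vectors, the natural error bound comes out as $\frac{\epsilon}{2}(\|x\|_2^2+\|y\|_2^2)$, and one would then need the AM-GM inequality in the wrong direction to reach $\epsilon\|x\|_2\|y\|_2$. Normalizing at the start, so that the parallelogram identity yields the constant $4$ exactly, circumvents this and preserves the desired $\|x\|_2\|y\|_2$ form on the right-hand side. The probability exponent is unchanged from Lemma \ref{lemma: squared result} up to the constant $\mainconstant$, with the factor of $4$ (instead of $2$) in front of the exponential reflecting the two-event union bound.
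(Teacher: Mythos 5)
Your proposal is correct and follows essentially the same route as the paper: normalize $x,y$ to unit vectors, apply Lemma \ref{lemma: squared result} to the sum and difference vectors, and combine via polarization and a union bound, which is exactly the argument the paper sketches (citing the analogous proofs in Vu et al.\ and D'Ambrosio et al.). Your write-up simply spells out the polarization and parallelogram steps that the paper leaves implicit.
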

\begin{proof}
    The proof is similar to that of Proposition 1 in \cite{Vu2018} and Lemma 1 in \cite{DAmbrosio2020}. We apply Lemma \ref{basic result} to any two vectors $u + v$, $u - v$  for $u, v$ being the normalised vectors for $x,y$, and then apply the union bound.
\end{proof}

\begin{lemma}\label{lemma: sub-gaussian quadratic form}
    Let $\projector \in \reals{\dimprojector \times \dimoriginal}$ be a sparse sub-gaussian random projection such as that of Lemma \ref{lemma: squared result} and let $\epsilon \in (0,1)$. Then there is a universal constant $\mainconstant$ such that, for any vectors $x,y \in \reals{\dimoriginal}$ and squared matrix $Q$
    \begin{equation*}
        x^{}\top Qy - 3 \epsilon \|x\|_2\|y\|_2 \fnorm{Q} \leq x^{\top}\projectorT \projector Q \projectorT \projector y \leq x^{}\top Qy + 3 \epsilon \|x\|_2\|y\|_2 \fnorm{Q}
    \end{equation*}
with probability at least $1-8re^{-\mainconstant \epsilon^2 k^5 \gamma^4}$, where $r$ is the rank of $Q$. 
\end{lemma}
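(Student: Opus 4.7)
The plan is to diagonalize $Q$ via its singular value decomposition, reduce the quartic expression $x^\top \projectorT \projector Q \projectorT \projector y$ to a weighted sum of inner products of the form handled by Lemma~\ref{lemma: sub-gaussian vector inner product}, and then recombine the per-pair errors through Cauchy--Schwarz so that $\|Q\|_F = \sqrt{\sum_i \sigma_i^2}$ appears naturally. Concretely, write $Q = \sum_{i=1}^{r} \sigma_i u_i v_i^\top$ with orthonormal families $\{u_i\}$ and $\{v_i\}$, so that
\begin{equation*}
x^\top Q y \;=\; \sum_{i=1}^r \sigma_i \langle x, u_i \rangle \langle v_i, y \rangle \qquad \text{and} \qquad x^\top \projectorT \projector Q \projectorT \projector y \;=\; \sum_{i=1}^r \sigma_i \langle \projector x, \projector u_i \rangle \langle \projector v_i, \projector y \rangle.
\end{equation*}

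Next I would apply Lemma~\ref{lemma: sub-gaussian vector inner product} to each of the $2r$ pairs $(x,u_i)$ and $(v_i,y)$, producing error terms $E_i^{(1)} := \langle \projector x,\projector u_i\rangle - \langle x,u_i\rangle$ and $E_i^{(2)} := \langle \projector v_i,\projector y\rangle - \langle v_i,y\rangle$. Because $\|u_i\|_2 = \|v_i\|_2 = 1$, the bounds $|E_i^{(1)}| \le \epsilon \|x\|_2$ and $|E_i^{(2)}| \le \epsilon \|y\|_2$ each fail with probability at most $4e^{-\mainconstant\epsilon^2 k^5 \gamma^4}$, and a union bound over the $2r$ events gives a total failure probability of at most $8r e^{-\mainconstant\epsilon^2 k^5 \gamma^4}$, matching the statement.

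The core algebraic step is the expansion
\begin{equation*}
x^\top \projectorT \projector Q \projectorT \projector y - x^\top Q y \;=\; \sum_{i=1}^{r} \sigma_i \bigl( \langle x, u_i \rangle E_i^{(2)} + E_i^{(1)} \langle v_i, y \rangle + E_i^{(1)} E_i^{(2)} \bigr),
\end{equation*}
followed by a separate estimate of each summand. For the first linear term, Cauchy--Schwarz combined with $\max_i |E_i^{(2)}| \le \epsilon \|y\|_2$ and $\sum_i \langle x,u_i\rangle^2 \le \|x\|_2^2$ yields
\begin{equation*}
\Bigl| \sum_i \sigma_i \langle x, u_i \rangle E_i^{(2)} \Bigr| \;\le\; \epsilon \|y\|_2 \sqrt{\sum_i \sigma_i^2}\, \sqrt{\sum_i \langle x,u_i\rangle^2} \;\le\; \epsilon \|x\|_2 \|y\|_2 \fnorm{Q},
\end{equation*}
and the second linear term is bounded identically by symmetry.

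The delicate step will be handling the quadratic cross-term $\sum_i \sigma_i E_i^{(1)} E_i^{(2)}$ while keeping $\fnorm{Q}$ in the final bound. Writing this as $\sqrt{\sum \sigma_i^2 (E_i^{(1)})^2}\cdot\sqrt{\sum (E_i^{(2)})^2}$ via Cauchy--Schwarz preserves the Frobenius factor but introduces an additional $\sqrt{r}\,\epsilon$; for $\epsilon \in (0,1)$ this higher-order piece is subdominant relative to the two linear contributions and is absorbed into the constant $3$. Combining the three estimates then yields $| x^\top \projectorT \projector Q \projectorT \projector y - x^\top Q y | \le 3\epsilon \|x\|_2 \|y\|_2 \fnorm{Q}$, which is equivalent to the claimed two-sided inequality.
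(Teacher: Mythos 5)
Your overall route is the one the paper intends: its proof of this lemma is only a pointer to the argument of D'Ambrosio et al.\ combined with Lemma~\ref{lemma: sub-gaussian vector inner product}, i.e.\ exactly your SVD decomposition $Q=\sum_{i\le r}\sigma_i u_i v_i^\top$, an application of the inner-product lemma to the $2r$ pairs $(x,u_i)$ and $(v_i,y)$ (each costing $4e^{-\mainconstant\epsilon^2 k^5\gamma^4}$, so the union bound correctly produces the factor $8r$), and Cauchy--Schwarz plus Bessel to turn $\sqrt{\sum_i\sigma_i^2}$ into $\fnorm{Q}$ in the two linear error terms. Up to that point your reconstruction is sound and matches the intended proof.

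The flaw is in the cross-term. Your own estimate gives $\bigl|\sum_i\sigma_i E_i^{(1)}E_i^{(2)}\bigr|\le \epsilon^2\|x\|_2\|y\|_2\|Q\|_*\le\sqrt r\,\epsilon^2\|x\|_2\|y\|_2\fnorm{Q}$, and the assertion that this piece is ``subdominant for $\epsilon\in(0,1)$ and absorbed into the constant $3$'' is false: $\sqrt r\,\epsilon^2\le\epsilon$ only when $\epsilon\le 1/\sqrt r$, whereas the lemma is claimed for every $\epsilon\in(0,1)$ and arbitrary rank. Using only the per-pair bounds $|E_i^{(1)}|\le\epsilon\|x\|_2$ and $|E_i^{(2)}|\le\epsilon\|y\|_2$ you cannot avoid the nuclear-norm factor in the second-order term, since $E_i^{(1)}E_i^{(2)}=\epsilon^2\|x\|_2\|y\|_2$ for every $i$ is consistent with those bounds; so what your argument actually establishes is $|x^\top\projectorT\projector Q\projectorT\projector y-x^\top Qy|\le 2\epsilon\|x\|_2\|y\|_2\fnorm{Q}+\epsilon^2\|x\|_2\|y\|_2\|Q\|_*$. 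To recover the statement as written you must either restrict $\epsilon$ so that $\epsilon\|Q\|_*\le\fnorm{Q}$ (in particular $\epsilon\le 1/\sqrt r$) or carry the mixed second-order term explicitly; as written, the final inequality with the constant $3$ over the whole range $\epsilon\in(0,1)$ does not follow from your estimates. This is precisely the delicate point of the D'Ambrosio-style argument the paper defers to, and the same higher-order effect is what forces the nuclear norm into Lemma~\ref{lemma: frobenious and nuclear bound}.
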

\begin{proof}
    The proof is similar to that of \cite{DAmbrosio2020}, but applying Lemma \ref{lemma: sub-gaussian vector inner product}.
\end{proof}

Now, can we find similar bounds for the inner product of two matrices after applying the positive transformation $\phi_{\projector}: A \rightarrow \projectormap{A}$? 
In general, we cannot bound $|\langle \projectormap{A}, \projectormap{B} \rangle - \langle A, B \rangle|$ by just the Frobenious norm, as the following result from \cite{Bluhm2019} states:
\begin{theorem}\label{thm: negative result frobenious norm}
    Let $\projector \in \reals{\dimprojector \times \dimoriginal}$ be a random projection and $A \rightarrow \projectormap{A}$ be a random positive map such that with strictly positive probability for any $A_1, \hdots, A_m \in \reals{\dimoriginal \times \dimoriginal}$ and $0 < \epsilon < \frac{1}{4}$ we have
    \begin{equation*}
        |\langle \projectormap{A_i}, \projectormap{A_j} \rangle - \langle A_i, A_j \rangle| \leq \epsilon \|A_i\|_F\|A_j\|_F
    \end{equation*}
    where $\dimoriginal$ is the original dimension and $\dimprojector$ is there projected dimension. Then $\dimprojector = \Omega(\dimprojector)$, where $\Omega$ is used to indicate an asymptotic lower bound.
\end{theorem}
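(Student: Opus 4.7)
The strategy is to exhibit an explicit family of test matrices for which the hypothesised bound pins the projector $\projector$ down tightly enough to force $\dimprojector$ to scale with $\dimoriginal$. The natural first candidates are the $\dimoriginal$ rank-one projectors $A_i = e_i e_i^\top$ for $i = 1, \ldots, \dimoriginal$, since they satisfy $\fnorm{A_i} = 1$ and $\langle A_i, A_j \rangle = \delta_{ij}$. Applying the cyclic property of the trace,
\begin{equation*}
\langle \projectormap{A_i}, \projectormap{A_j} \rangle \;=\; \operatorname{tr}(\projector e_i e_i^\top \projectorT \projector e_j e_j^\top \projectorT) \;=\; \langle \projector e_i, \projector e_j \rangle^2,
\end{equation*}
so, conditionally on the positive-probability event on which the hypothesis holds, the assumption reduces to the system $\bigl| \langle \projector e_i, \projector e_j \rangle^2 - \delta_{ij} \bigr| \leq \epsilon$ for every $1 \leq i,j \leq \dimoriginal$.

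From here I would extract a geometric consequence: the vectors $u_i := \projector e_i \in \reals{\dimprojector}$ form a near-orthonormal system, with $\twonorm{u_i}^4 \in [1-\epsilon, 1+\epsilon]$ and $|\langle u_i, u_j\rangle| \leq \sqrt{\epsilon}$ for $i \neq j$. After rescaling to unit vectors $v_i := u_i/\twonorm{u_i}$, one obtains $\dimoriginal$ unit vectors in $\reals{\dimprojector}$ whose pairwise inner products are bounded in absolute value by some $\delta(\epsilon) < 1$ for every $\epsilon < 1/4$. A standard rank/packing argument on the Gram matrix $G_{ij} = \langle v_i, v_j \rangle$ then gives the dimension lower bound: since $\operatorname{rank}(G) \leq \dimprojector$, $\operatorname{tr}(G) = \dimoriginal$, and $\fnorm{G}^2 \leq \dimoriginal + \dimoriginal(\dimoriginal-1)\delta(\epsilon)^2$, the rank-trace inequality $(\operatorname{tr} G)^2 \leq \operatorname{rank}(G)\cdot\fnorm{G}^2$ yields $\dimprojector \geq \dimoriginal / (1 + (\dimoriginal-1)\delta(\epsilon)^2)$.

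The principal obstacle will be sharpening the last step to obtain the genuinely linear-in-$\dimoriginal$ conclusion $\dimprojector = \Omega(\dimoriginal)$ uniformly in $\epsilon < 1/4$. The naive Welch-type inequality above only delivers $\dimprojector = \Omega(1/\epsilon)$ when $\dimoriginal$ grows with $\epsilon$ held fixed, so to close the gap I would enlarge the test family beyond the diagonal rank-one matrices --- for example, by taking the symmetrised matrix units $(e_i e_j^\top + e_j e_i^\top)/\sqrt{2}$, and working inside the $\dimprojector(\dimprojector+1)/2$-dimensional symmetric matrix space --- or alternatively invoke a Kabatyanskii--Levenshtein-type spherical code bound that exploits the \emph{positivity} of the map $\phi_{\projector}$ (which rank-one test matrices alone do not feel). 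The content of the theorem really lies here, since the surrounding algebraic reduction is essentially bookkeeping; the lower bound is what rules out nontrivial compression under a Frobenius-only guarantee and thereby motivates the move to the nuclear-norm bounds used in the remainder of the paper.
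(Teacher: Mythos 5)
First, note that the paper does not actually prove this theorem: it cites \cite{Bluhm2019} for the proof, so the comparison here is against the argument in that reference rather than one in the text. Your reduction is fine as far as it goes --- with $A_i=e_ie_i^\top$ one indeed gets $\langle \projectormap{A_i},\projectormap{A_j}\rangle=\langle \projector e_i,\projector e_j\rangle^2$ and hence a nearly orthonormal family $\projector e_1,\dots,\projector e_\dimoriginal$ in $\reals{\dimprojector}$ --- but the gap you flag at the end is not a technicality to be ``sharpened'': it is the whole theorem, and neither of your proposed repairs closes it. A family of unit vectors with pairwise inner products bounded by a fixed constant $\sqrt{\epsilon}<1/2$ can have size exponential in the dimension, so Gram-rank/Welch arguments and Kabatyanskii--Levenshtein spherical-code bounds can never give more than $\dimprojector=\Omega(1/\epsilon)$ (respectively $\Omega(\log \dimoriginal)$) for fixed $\epsilon<1/4$. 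Enlarging the test family to the symmetrised matrix units does not help either: you then have $\approx \dimoriginal^2/2$ nearly orthonormal elements in the $\dimprojector(\dimprojector+1)/2$-dimensional space of symmetric matrices, and the same rank bound again saturates at a quantity depending only on $\epsilon$. Packing arguments alone simply cannot see the difference between $\dimprojector=\Omega(1/\epsilon)$ and $\dimprojector=\Omega(\dimoriginal)$, and you never actually use positivity beyond the trivial observation that the off-diagonal inner products are squares.

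The mechanism that does give the linear bound uses positivity through the \emph{trace}, with the identity matrix as an additional test matrix. For a psd matrix $M$ one has $\Tr M\geq \fnorm{M}$, so each image $\projectormap{(e_ie_i^\top)}$ satisfies $\Tr\bigl(\projectormap{(e_ie_i^\top)}\bigr)\geq \fnorm{\projectormap{(e_ie_i^\top)}}\geq\sqrt{1-\epsilon}$; summing over $i$ and using linearity with $\sum_i e_ie_i^\top=I_\dimoriginal$ gives $\Tr\bigl(\projectormap{I_\dimoriginal}\bigr)\geq \dimoriginal\sqrt{1-\epsilon}$. On the other hand, Cauchy--Schwarz in the target space and the hypothesis applied with $A_i=A_j=I_\dimoriginal$ (whose Frobenius norm is only $\sqrt{\dimoriginal}$, while its trace is $\dimoriginal$ --- this is where the extra factor of $\sqrt{\dimoriginal}$ comes from) give $\Tr\bigl(\projectormap{I_\dimoriginal}\bigr)\leq\sqrt{\dimprojector}\,\fnorm{\projectormap{I_\dimoriginal}}\leq\sqrt{\dimprojector\,\dimoriginal(1+\epsilon)}$. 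Combining the two inequalities yields
\begin{equation*}
\dimprojector \;\geq\; \frac{1-\epsilon}{1+\epsilon}\,\dimoriginal,
\end{equation*}
which is the claimed $\Omega(\dimoriginal)$ bound for every fixed $\epsilon<1/4$ (for the congruence map this is concrete: $\Tr(\projector\projectorT)=\sum_i\twonorm{\projector e_i}^2$). So the missing idea is not a finer spherical code but the trace--Frobenius comparison for psd images, which is exactly where the positivity hypothesis earns its keep; without it the statement is false, since non-positive Johnson--Lindenstrauss-type sketches of $\reals{\dimoriginal\times\dimoriginal}$ with Frobenius-norm guarantees do exist in dimension $O(\epsilon^{-2}\log m)$.
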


The proof of Theorem \ref{thm: negative result frobenious norm} can be found in \cite{Bluhm2019}. 
This result states that in general, it is not possible to project the original matrix in a nontrivial way while having a positive probability of an error bounded by the Frobenious norm. 
However, it is possible to bound  $|\langle \projectormap{A}, \projectormap{B} \rangle - \langle A, B \rangle|$ with the nuclear norm, and with a combination of the nuclear norm and the Frobenious norm. From now on, we denote the nuclear norm by $\| \cdot \|_*$, and recall that the nuclear norm of a matrix is the sum of its singular values.  
Recall also that the Frobenious norm is always bounded above by the nuclear norm, since $\| A \|_F = \sqrt{\sum \sigma_i^2} \leq \sum \sigma_i = \| A \|_*$, where $\sigma_i$ are the singular values of the matrix $A$. 
Since results from \cite{Poirion2023} give us a tighter bound than those of \cite{Bluhm2019}, we focus on bounding the inner product with a combination of both norms from now on. 
Moreover, to make this document as self-contained as possible, we include here the proof of Lemma \ref{lemma: frobenious and nuclear bound}, which is taken from paper \cite{Poirion2023} but needs to be updated to take the sparsity of the sub-gaussian projector into account.
\begin{lemma}\label{lemma: frobenious and nuclear bound}
    Let $\projector$ be a sparse sub-gaussian random projector such as that of Lemma \ref{lemma: jll sub-gaussian} and let $\epsilon \in (0,1)$. Then for any $A, B \in \reals{\dimoriginal \times \dimoriginal}$, we have that with probability at least  $1-8 r_A r_B e^{-\mainconstant \epsilon^2 k^5 \gamma^4}$, 
    \begin{equation*}
        |\langle A, B \rangle - \langle \projectormap{A}, \projectormap{B} \rangle| \leq 3 \epsilon \|A\|_F\|B\|_*,
    \end{equation*}
    where $r_A, r_B$ are the ranks of $A$ and $B$. 
\end{lemma}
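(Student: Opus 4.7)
The plan is to reduce the matrix inner product inequality to repeated applications of Lemma on sub-gaussian quadratic forms by exploiting the cyclic property of trace together with the singular value decomposition of $B$.

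First I would rewrite the projected inner product so that only one of the two matrices carries the projector. Using that $\phi_{\projector}(A)^\top = \phi_{\projector}(A^\top)$ and the cyclic property of trace,
\begin{equation*}
    \langle \projectormap{A}, \projectormap{B} \rangle = \Tr(P A^\top \projectorT P B \projectorT) = \Tr(A^\top \projectorT P B \projectorT P) = \langle A, \projectorT P B \projectorT P \rangle,
\end{equation*}
so the quantity to bound becomes $\big|\langle A, \projectorT P B \projectorT P - B\rangle\big|$.

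Next I would plug in the (compact) SVD of $B$, namely $B = \sum_{j=1}^{r_B} \sigma_j^B u_j^B (v_j^B)^\top$ with unit left and right singular vectors. Linearity of the inner product yields
\begin{equation*}
    \langle A, \projectorT P B \projectorT P - B\rangle = \sum_{j=1}^{r_B} \sigma_j^B \Big[(u_j^B)^\top \projectorT P A \projectorT P v_j^B - (u_j^B)^\top A v_j^B\Big],
\end{equation*}
using the identity $\langle A, u v^\top\rangle = u^\top A v$. Each bracketed term is now in precisely the form handled by Lemma \ref{lemma: sub-gaussian quadratic form}, applied to the fixed unit vectors $u_j^B, v_j^B$ and the matrix $A$ (of rank $r_A$). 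For a single $j$, that lemma gives the bound $3\epsilon \|u_j^B\|_2 \|v_j^B\|_2 \fnorm{A} = 3\epsilon \fnorm{A}$ with failure probability at most $8 r_A e^{-\mainconstant \epsilon^2 k^5 \gamma^4}$.

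Finally I would take a union bound over the $r_B$ summands, which multiplies the failure probability by $r_B$ and gives the stated $1 - 8 r_A r_B e^{-\mainconstant \epsilon^2 k^5 \gamma^4}$. On the resulting good event, the triangle inequality and $\sum_j \sigma_j^B = \nnorm{B}$ deliver
\begin{equation*}
    |\langle A, B\rangle - \langle \projectormap{A}, \projectormap{B}\rangle| \leq \sum_{j=1}^{r_B} \sigma_j^B \cdot 3\epsilon \fnorm{A} = 3\epsilon \fnorm{A}\nnorm{B},
\end{equation*}
as desired. The only nontrivial conceptual step is the cyclic rearrangement that moves all the projectors onto $B$ so that $\fnorm{A}$ (not $\nnorm{A}$) appears; after that, the proof is mechanical. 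The main thing to check carefully is the bookkeeping of the union bound constants — since Lemma \ref{lemma: sub-gaussian quadratic form} already has its own $8 r_A$ factor built in from applying Lemma \ref{lemma: sub-gaussian vector inner product} inside an SVD of $A$, we pick up exactly one further factor of $r_B$ from the outer sum.
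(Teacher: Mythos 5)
Your proposal is correct and follows essentially the same route as the paper's proof: decompose $B$ via its SVD, apply the quadratic-form preservation result (Lemma \ref{lemma: sub-gaussian quadratic form}) with $Q=A$ and the unit singular vectors of $B$, take a union bound over the $r_B$ rank-one terms, and sum the singular values to obtain the nuclear norm, yielding the same $1-8r_Ar_Be^{-\mainconstant\epsilon^2k^5\gamma^4}$ probability. The only cosmetic difference is that the paper treats the rank-one case first and then sums, whereas you make the trace-cyclicity rearrangement explicit and handle the sum directly; the content is identical.
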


\begin{proof}
    Recall from \cite{DAmbrosio2020} that random projections approximately preserve quadratic forms. This means that for any two vectors $x,y \in \reals{\dimoriginal}$, a random projection $P \in \reals{\dimprojector \times \dimoriginal}$, and a square matrix $Q \in \reals{\dimoriginal \times \dimoriginal}$ of rank $r$, with probability at least
    $1-8re^{-\mainconstant \epsilon^2 k^5 \gamma^4}$ we have 
    \begin{equation}\label{quadratic form}
        x^{\top} Q y - 3 \| x \| \| y \| \| Q \|_F 
        \leq x^{\top} P^{\top} \projectormap{Q} P y 
        \leq  x^{\top} Q y + 3 \| x \| \| y \| \| Q \|_F.
    \end{equation}
    Now, let $A$ be a $\dimoriginal \times \dimoriginal$ matrix of rank $r_1$, and $B$ be a $\dimoriginal \times \dimoriginal$ matrix of rank $r_2$.
    Assume first that $r_2 = 1$. Then there exists scalar $\sigma > 0$ and unit vectors $u,v \in \reals{\dimoriginal}$ such that $B=\sigma u v^{\top}$. Now, by applying \eqref{quadratic form} to $Q=A$, $x=u$, and $y=v$ we have that
    \begin{align}
        \langle \projectormap{B}, \projectormap{A} \rangle & = 
        \sigma \langle \projector u v^{\top} \projectorT, \projectormap{A} \rangle \\
        & = \sigma u^{\top} \projectorT \projectormap{A} \projector v \\
        & \leq \sigma u^{\top} A v - 3 \sigma \| u \| \| v \| \| A \|_F \\
        & = \langle B, A \rangle - 3 \sigma \| A \|_F \label{projection bound}
    \end{align}
    with probability at least $1-8re^{-\mainconstant \epsilon^2 k^5 \gamma^4}$.
    We can do the same to bound $\langle \projectormap{B}, \projectormap{A} \rangle$ from the other side. 
    Consider now the case where $r_2 > 1$. We can then write $B$ using its singular value decomposition $B=U\Sigma V^{\top}$ as 
    \begin{equation*}
        B = \sum_i \sigma_i u_i v_i^{\top}
    \end{equation*}
    where $\sigma_i > 0$ are the singular values of $B$, and $u_i, v_i$ are unit vectors. 
    If we apply \eqref{projection bound} to all $u_i, v_i$, the union bound with $i \in \{1, \hdots, r_2\}$, and the definition of nuclear norm, we get the desired result.  
\end{proof}
\section{Feasibility and Optimality Bounds of The Projected Problem}\label{sec: feasibility and optimality bound}
In this section we present the main feasibility and optimality results of the paper. 
It is possible that even though the original problem is feasible, the projected one is infeasible. 
However, we can bound this infeasibility and obtain some probabilistic results about approximate feasibility. 
We say a problem is $\epsilon$-feasible if all constraints are satisfied with a tolerance of $\epsilon$. For example, the constraint $\langle A, X \rangle = b$ is $\epsilon$-feasible if there is an $X$ such that $b - \epsilon \leq \langle A, X \rangle \leq b + \epsilon$. Following this, we show that random projections can approximately preserve feasibility up to some tolerance. 

\begin{theorem}[Approximate Feasibility]\label{lemma: approx feasibility}
    Assume that the original problem \eqref{primal} is feasible and that $\Tr(X) \leq w^2$ for all feasible solutions. Let $\epsilon \geq 3(\numconstraints + 1)\delta w^2\max(\|A_i\|_F)$, then the projected problem \eqref{projected primal} is $\epsilon$-feasible with probability at least $1-16\sum_i^{m} r_{A_i}e^{-\mainconstant \delta^2 k^5 \gamma^4}$ where $r_{A_i}$ is the rank of $A_i$. 
\end{theorem}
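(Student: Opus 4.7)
The plan is to exhibit an explicit candidate for an $\epsilon$-feasible solution of \eqref{projected primal} built from any feasible $X$ of the original \eqref{primal}, and then to control the resulting constraint violations using Lemma \ref{lemma: frobenious and nuclear bound}. The natural candidate is $Y := \projectormap{X}$. Because congruence by $\projector$ preserves positive semidefiniteness, $Y \succeq 0$ holds automatically, so the semidefinite cone constraint is satisfied exactly and only the $m$ linear equalities need to be checked.

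For each $i$, using $\langle A_i, X\rangle = b_i$, I would rewrite the violation as
$$
|\langle \projectormap{A_i}, Y\rangle - b_i| = |\langle \projectormap{A_i}, \projectormap{X}\rangle - \langle A_i, X\rangle|
$$
and invoke Lemma \ref{lemma: frobenious and nuclear bound} with $A \leftarrow A_i$ and $B \leftarrow X$. This bounds the deviation above by $3\delta\, \|A_i\|_F\, \|X\|_*$ with the stated high probability. Since $X \succeq 0$, the nuclear norm collapses to the trace, so $\|X\|_* = \Tr(X) \leq w^2$ by hypothesis. Thus the per-constraint violation is at most $3\delta\, w^2 \max_i \|A_i\|_F$, which is at most $\epsilon/(m+1) \leq \epsilon$ by the standing assumption on $\epsilon$. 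A union bound over the $m$ constraints then yields the claimed global $\epsilon$-feasibility.

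The main obstacle is tracking the exact constants in the failure probability. Lemma \ref{lemma: frobenious and nuclear bound} naively produces a tail of order $r_{A_i} r_X$, whereas the target statement retains only $r_{A_i}$ in the exponent. I expect to close this gap by decomposing $A_i$ (rather than $X$) through its spectral decomposition into rank-one pieces and applying Lemma \ref{lemma: sub-gaussian quadratic form} on each piece, with $X$ summarized only through its trace bound $w^2$. Concretely, writing $A_i = \sum_{\ell=1}^{r_{A_i}} \sigma_\ell\, u_\ell v_\ell^\top$ and using the linearity of the inner product, each rank-one contribution contributes a probability cost of order $e^{-\mainconstant \delta^2 k^5 \gamma^4}$ that unions cleanly to $r_{A_i}$, while the coefficients are absorbed into $\|A_i\|_F \|X\|_*$. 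The factor of $16$ (instead of $8$) in the probability then arises naturally from a two-sided version of the deviation bound, and the factor $(m+1)$ (rather than $m$) in the hypothesis on $\epsilon$ accounts for distributing the tolerance across the $m$ constraints with a small additional slack used in the decomposition step.
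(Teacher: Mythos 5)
Your core construction (lift any feasible $X$ to $Y=\projectormap{X}$, note $Y\succeq 0$, and bound each violation $|\langle \projectormap{A_i},Y\rangle-b_i|$ by Lemma \ref{lemma: frobenious and nuclear bound} with $\|X\|_*=\Tr(X)\le w^2$) is sound, and it even gives a per-constraint violation $3\delta w^2\max_i\|A_i\|_F$ without the $(\numconstraints+1)$ factor. The genuine gap is exactly the one you flagged: the failure probability. Your union bound yields $1-8\,r_X\sum_i r_{A_i}e^{-\mainconstant\delta^2 k^5\gamma^4}$, and the repair you sketch does not remove the factor $r_X$. If you decompose $A_i=\sum_{\ell=1}^{r_{A_i}}\sigma_\ell u_\ell v_\ell^{\top}$, the per-piece quantity you must control is the bilinear form $|u_\ell^{\top}\projectorT\projector X\projectorT\projector v_\ell-u_\ell^{\top}Xv_\ell|$, and the only available tool, Lemma \ref{lemma: sub-gaussian quadratic form} applied with $Q=X$, has failure probability proportional to $\rank(X)$ (it is itself proved by splitting $Q$ into rank-one pieces). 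So decomposing $A_i$ instead of $X$ merely swaps which rank multiplies which; you still end up with $r_{A_i}r_X$ events, and moreover the resulting deviation is $3\delta\|A_i\|_*\|X\|_F$, not $3\delta\|A_i\|_F\|X\|_*$. Within this paper's toolkit there is no bilinear-form bound whose probability is independent of the rank of the middle matrix, so the stated probability cannot be reached along this route.

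The paper's proof is structurally different and this difference is precisely what eliminates $\rank(X)$. It never lifts a feasible $X$: it invokes the characterization (Kalantari) that feasibility of \eqref{primal} with $\Tr(X)\le w^2$ is equivalent to $b$ lying in the convex hull of the quadratic-form images $Q(\mathbf{x})=(\mathbf{x}^{\top}A_1\mathbf{x},\dots,\mathbf{x}^{\top}A_m\mathbf{x})^{\top}$ over $\|\mathbf{x}\|\le w$, and by Carath\'eodory writes $b=\sum_{i=1}^{\numconstraints+1}\alpha_i Q(\mathbf{x_i})$ with at most $\numconstraints+1$ rank-one atoms. Each atom is then handled by quadratic-form preservation with $Q=A_j$, whose failure probability involves only $\rank(A_j)$, and the psd witness for the projected problem is the convex combination of the projected rank-one atoms. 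The $(\numconstraints+1)$ in the hypothesis on $\epsilon$ comes from bounding $\sum_i\alpha_i\|\mathbf{x_i}\|^2$ over these Carath\'eodory atoms, not, as you guessed, from distributing the tolerance across the $m$ constraints; likewise the factor $16$ is not explained by two-sidedness (Lemma \ref{lemma: frobenious and nuclear bound} is already two-sided with constant $8$). In short: your argument proves a correct variant of the theorem with probability degraded by $\rank(X)$, but to obtain a probability depending only on the $r_{A_i}$ you need the convex-hull/Carath\'eodory reduction (or some equivalent device bounding the number of rank-one atoms by $\numconstraints+1$), which is the missing idea.
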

\begin{proof}
    Semidefinite programming feasibility is equivalent to testing membership to the convex hull relaxation of the quadratic forms obtained from the data matrices of the constraints \cite{Kalantari2019}. In particular, a SDP such as \eqref{primal} with the additional assumption of $\Tr(X)\leq w^2$ is feasible if and only if $\mathbf{b}$ is in 
    \begin{equation}
        CHR = \{\sum_{i=1}^{t}\alpha_i Q(\mathbf{x_i}): \sum_{i=1}^{t}\alpha_i = 1, \alpha_i \geq 0, \|\mathbf{x_i}\| \leq w, \mathbf{x_i} \in \reals{n}\}
    \end{equation}
    for some $t$, where $Q(\mathbf{x_i}) = (\mathbf{x_i}^{\top}A_1\mathbf{x_i}, \dots, \mathbf{x_i}^{\top}A_m\mathbf{x_i})^{\top}$. By Carathéodory's theorem, if a point lies in the convex hull of a $n$-dimensional set, then it can be written as a convex combination of at most $n+1$ points. 
    Since $Q(\mathbf{x_i})$ is $\numconstraints$ dimensional, we can bound $t$ to be at most $m+1$. Following this, we say that $b$ is in $C$ if there exists pairs $(\alpha_1, \mathbf{x_1}), \dots (\alpha_{m+1}, \mathbf{x_{m+1}}) \in [0,1] \times \reals{\dimoriginal}$ such that $\sum^{\numconstraints + 1}_{i=1} \alpha_i = 1$ and $\sum_{i=1}^{\numconstraints + 1} \alpha_i Q(\mathbf{x_i}) = b$. 
    
    Testing the feasibility of the projected problem is equivalent to testing if $\mathbf{b}$ is in 
    \begin{equation}
        CHR' = \{\sum_{i=1}^{\numconstraints +1}\alpha_i Q'(\mathbf{y_i}): \sum_{i=1}^{m+1}\alpha_i = 1, \alpha_i \geq 0, \|\mathbf{y_i}\| \leq w, \mathbf{y_i} \in \reals{\dimprojector}\}
    \end{equation}
    where $Q'(\mathbf{y_i}) = (\mathbf{y_i}^{\top}PA_1P^{\top}\mathbf{y_i}, \dots, \mathbf{y_i}^{\top}PA_mP^{\top}\mathbf{y_i})^{\top}$. 
    To show that the projected problem is $\epsilon$-feasible for $\epsilon \geq 12\delta r^2 \max(\|A_i\|_F)$, consider the distance between $\mathbf{b}$ and the the projection of $C$ for arbitrary $(\alpha_1, \mathbf{x_1}), \dots (\alpha_{m+1}, \mathbf{x_{m+1}}) \in [0,1] \times \reals{\dimoriginal}$ such that $\sum^{\numconstraints + 1}_{i=1} \alpha_i = 1$ and $\sum_{i=1}^{\numconstraints + 1} \alpha_i Q(\mathbf{x_i}) = b$. This distance vector is then
    \begin{equation}
        \mathbf{d} = \left| \ \mathbf{b}  - \left( \sum_{i=1}^{\numconstraints + 1}\alpha_i \begin{bmatrix}
            \mathbf{x_i}^{\top} P^{\top} P A_1 P^{\top} P \mathbf{x_i} \\
           \vdots \\
           \mathbf{x_i}^{\top} P^{\top} P A_m P^{\top} P \mathbf{x_i}
         \end{bmatrix} 
         \right)
         \right|.
    \end{equation}
    By the approximate preservation of quadratic forms presented in the proof of Lemma \ref{lemma: frobenious and nuclear bound}, we know that for a random projection $\projector \in \reals{\dimprojector \times \dimoriginal}$, $| \mathbf{x}^{\top}A\mathbf{x} - \mathbf{x} P^{\top} \projectormap{A} P \mathbf{x}| \leq 3\delta \|x\|^2\|A\|_F$ with probability at least $1-8(m+1)\sum_i^{m} \rank (A_i)e^{-\mainconstant \delta^2 k^5 \gamma^4}$ where $\mainconstant$ is some universal constant. Following this, we have that
    \begin{align}
         \mathbf{d} & \leq \left| \ \mathbf{b}  - \left( \sum_{i=1}^{\numconstraints + 1} \alpha_i \begin{bmatrix}
            \mathbf{x_i}^{\top} A_1 \mathbf{x_i} + 3\delta \|\mathbf{x_i}\|^2\|A_1\|_F\\
           \vdots \\
           \mathbf{x_i}^{\top} A_m \mathbf{x_i} + 3\delta \|\mathbf{x_i}\|^2\|A_m\|_F
         \end{bmatrix} 
        \right)
         \right| \\
         & = \left| \ \mathbf{b}  - \left( \mathbf{b}
         + \sum_{i=1}^{\numconstraints + 1} \alpha_i \begin{bmatrix}
             3\delta \|\mathbf{x_i}\|^2\|A_1\|_F\\
           \vdots \\
        3\delta \|\mathbf{x_i}\|^2\|A_m\|_F
         \end{bmatrix}
         \right)
         \right| \\
          & = \begin{bmatrix}
             3\delta \|A_1\|_F(\sum_{i=1}^{\numconstraints + 1} \alpha_i\|\mathbf{x_i}\|^2)\\
           \vdots \\
        3\delta \|A_{\numconstraints}\|_F(\sum_{i=1}^{\numconstraints + 1} \alpha_i\|\mathbf{x_i}\|^2)
         \end{bmatrix}  
          \leq \begin{bmatrix}
             3(\numconstraints + 1)\delta w^2 \|A_1\|_F\\
           \vdots \\
        3(\numconstraints + 1)\delta w^2 \|A_m\|_F 
         \end{bmatrix} \leq \epsilon \mathbf{1}
    \end{align}
    with probability at least $1-8(m+1)\sum_i^{m} \rank (A_i)e^{-\mainconstant \delta^2 k^5 \gamma^4}$. We have done all calculations with the right-hand side of the quadratic form bound, but similar results are obtained from the other side. We take the maximum among all $A_i$ to define $\epsilon$ we obtain the desired result.  
\end{proof}
Note that this might limit the applicability of Proposition \ref{prop: feasibility of lifting} since implies that for some cases we cannot obtain a solution to the projected problem that we can lift back to the space of the original problem. 
In order to overcome this issue, we solve the following SDP (or its dual), which is guaranteed to have a feasible and bounded solution when projected.  \\
\begin{minipage}[t]{0.45\textwidth}
    \begin{align*}\tag{\(\mathcal{P}_{\lambda}\)} \label{primal lambda}
        \min \quad & \objectivesdp + \sum_{i=1}^{\numconstraints}  \bar{\lambda}_i \ubidual - \sum_{i=1}^{\numconstraints} \underline{\lambda}_i  \lbidual \\
        \text{s.t.} \quad & \constraintsdp  + \bar{\lambda}_i - \underline{\lambda}_i = b_i \quad  i= 1, \hdots, \numconstraints, \\
        & X \succeq 0, \\
        & \bar{\lambda}_i \geq 0, \  \underline{\lambda}_i \geq 0.
    \end{align*}
    \end{minipage}
    \hfill 
    \begin{minipage}[t]{0.45\textwidth}
    \begin{align*}\label{dual lambda}\tag{\(\mathcal{D}_{\lambda}\)}
    \max \quad & b^{\top} z \\
    \text{s.t.} \quad & \sum_{i=1}^{\numconstraints} A_i y_i \preceq C , \\
    & \lbidual \leq y_i \leq \ubidual \quad  i= 1, \hdots, \numconstraints.
    \end{align*}
\end{minipage} \\
We can now apply the positive map $ \phi_{\projector}: A \rightarrow \projectormap{A}$ as we did for problems \eqref{projected primal} and \eqref{projected dual}, and get the following primal/dual pair: \\
\begin{minipage}[t]{0.60\textwidth}    \begin{align*}\tag{\(\mathcal{RP}_{\lambda}\)} \label{projected primal lambda}
        \min \quad & \langle PC P^{\top}, Y \rangle + \sum_{i=1}^{\numconstraints}  \bar{\lambda}_i \ubidual - \sum_{i=1}^{\numconstraints} \underline{\lambda}_i  \lbidual \\
        \text{s.t.} \quad & \langle  \projectormap{A_i} ,Y \rangle  + \bar{\lambda}_i - \underline{\lambda}_i = b_i \quad  i= 1, \hdots, \numconstraints, \\
        & Y \succeq 0, \\
        & \bar{\lambda}_i \geq 0, \  \underline{\lambda}_i \geq 0.
    \end{align*} 
    \end{minipage}
    \hfill 
    \begin{minipage}[t]{0.40\textwidth}
    \begin{align*}\tag{\(\mathcal{RD}_{\lambda}\)} \label{projected dual lambda}
    \max \quad & b^{\top} z \\
    \text{s.t.} \quad & \sum_{i=1}^{\numconstraints} \projectormap{A_i} z_i \preceq PC P^{\top}, \\
    & \lbidual \leq z_i \leq \ubidual \quad  i= 1, \hdots, \numconstraints.
    \end{align*}
\end{minipage}
\begin{lemma}\label{lemma: bounded dual}
    Assume that problem \eqref{primal lambda} is bounded and feasible. 
    Then the projected problem \eqref{projected primal lambda} and its dual \eqref{projected dual lambda}  are also bounded and feasible. 
\end{lemma}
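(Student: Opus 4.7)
The plan is to treat the four implications separately (feasibility and boundedness of both $(\mathcal{RP}_{\lambda})$ and $(\mathcal{RD}_{\lambda})$), combining elementary constructions of feasible points with weak duality and the positivity of the map $\phi_{\projector}$ already established in Proposition \ref{prop: feasibility of dual}.

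First I would argue feasibility of the projected primal $(\mathcal{RP}_{\lambda})$ directly and constructively. Because of the slack variables $\bar{\lambda}_i, \underline{\lambda}_i$, setting $Y = 0$, $\bar{\lambda}_i = \max(b_i, 0)$ and $\underline{\lambda}_i = \max(-b_i, 0)$ yields a feasible point regardless of the data; no probabilistic argument is required. Next, for feasibility of the projected dual $(\mathcal{RD}_{\lambda})$, the idea is to transfer any dual feasible point of $(\mathcal{D}_{\lambda})$ to $(\mathcal{RD}_{\lambda})$: if $y$ satisfies $\sum_i A_i y_i \preceq C$ and $\lbidual \leq y_i \leq \ubidual$, then applying the positive map $\phi_{\projector}$ (exactly as in Proposition \ref{prop: feasibility of dual}) gives $\sum_i \projectormap{A_i} y_i \preceq \projectormap{C}$, while the box constraints are inherited unchanged. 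So it suffices to show that $(\mathcal{D}_{\lambda})$ is feasible, which follows from the hypothesis that $(\mathcal{P}_{\lambda})$ is bounded and feasible, together with strong duality for SDP: Slater's condition holds for $(\mathcal{P}_{\lambda})$ because $X = \epsilon I$ with a suitable strictly positive choice of $\bar{\lambda}_i, \underline{\lambda}_i$ lies in the relative interior of the feasible set, so the dual optimum is attained and in particular $(\mathcal{D}_{\lambda})$ is nonempty.

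Once the two feasibility statements are in place, boundedness is immediate. For $(\mathcal{RD}_{\lambda})$, the box constraints $\lbidual \leq z_i \leq \ubidual$ confine $z$ to a compact set, so $b^{\top} z$ is bounded above by $\sum_i \max(b_i \ubidual, b_i \lbidual)$. For $(\mathcal{RP}_{\lambda})$, weak duality between $(\mathcal{RP}_{\lambda})$ and $(\mathcal{RD}_{\lambda})$ gives that the primal objective is bounded below by the value of any feasible dual point, which exists by the previous step.

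The only slightly delicate step is invoking strong duality to conclude that $(\mathcal{D}_{\lambda})$ is feasible, and this is handled by checking Slater's condition explicitly for $(\mathcal{P}_{\lambda})$ as above. The remaining pieces are essentially a routine rewriting of Propositions \ref{prop: feasibility of lifting} and \ref{prop: feasibility of dual} adapted to the slack reformulation, and weak duality.
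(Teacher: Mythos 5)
Your proposal is correct, and its skeleton matches the paper's: feasibility of the projected dual comes from transferring a feasible point of \eqref{dual lambda} through the positive map $\phi_{\projector}$ (i.e.\ the inclusion of dual feasible regions as in Proposition \ref{prop: feasibility of dual} and Corollary \ref{cor: inclusion of feasible regions}), boundedness of \eqref{projected dual lambda} comes from the box constraints $\lbidual \leq z_i \leq \ubidual$, and the statements for \eqref{projected primal lambda} follow by duality. Where you go beyond the paper's one-sentence argument is in filling two steps it leaves implicit: first, you exhibit an explicit feasible point of \eqref{projected primal lambda}, namely $Y=0$ with $\bar{\lambda}_i=\max(b_i,0)$, $\underline{\lambda}_i=\max(-b_i,0)$, which shows the projected primal is feasible unconditionally thanks to the slack variables; second, you justify that \eqref{dual lambda} is actually nonempty, by observing that \eqref{primal lambda} always admits a Slater point ($X=\epsilon I \succ 0$ with strictly positive slacks) so that boundedness of \eqref{primal lambda} plus strong duality yields attainment, hence feasibility, of the boxed dual — a point the paper's proof simply takes for granted when it invokes the feasible-region inclusion. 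Your final weak-duality step, bounding the projected primal below by any feasible value of \eqref{projected dual lambda}, is also made explicit rather than asserted. So the proposal is a more careful rendering of the same argument rather than a genuinely different route, and it closes the small logical gaps in the paper's terse proof.
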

\begin{proof}
    Since the dual variables of the projected problem are all bounded, and the feasible region of the original problem is contained in the projected problem by Corollary \ref{cor: inclusion of feasible regions}, the projected dual (and projected primal) is bounded and feasible as desired. 
\end{proof}
Moreover, strong duality is also preserved after projection, as the following result shows. 
\begin{lemma}\label{lemm: strong duality of projected}
    Assume that Slater's condition holds for \eqref{primal lambda}, then it also holds for \eqref{projected primal lambda}.
\end{lemma}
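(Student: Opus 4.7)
The plan is to convert a Slater point of \eqref{primal lambda} into a Slater point of \eqref{projected primal lambda} via the positive map $\phi_{\projector}$, together with a simple algebraic adjustment of the slack variables that compensates for the distortion the map introduces into the equality constraints. Concretely, suppose $(X^*, \bar{\lambda}^*, \underline{\lambda}^*)$ is strictly feasible for \eqref{primal lambda}, so $X^* \succ 0$, $\bar{\lambda}_i^*, \underline{\lambda}_i^* > 0$, and $\langle A_i, X^* \rangle + \bar{\lambda}_i^* - \underline{\lambda}_i^* = b_i$ for every $i$. I would take $Y^* := \projectormap{X^*}$ as the candidate matrix variable and then choose fresh slacks $(\bar{\lambda}', \underline{\lambda}')$ so that the equality constraints of \eqref{projected primal lambda} hold exactly with $\bar{\lambda}'_i, \underline{\lambda}'_i > 0$.

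The first step is to verify $Y^* \succ 0$. Since $\projector \in \reals{\dimprojector \times \dimoriginal}$ with $\dimprojector \leq \dimoriginal$ is drawn from a (sparse) sub-gaussian distribution, almost surely $\projector$ has full row rank, so $\projectorT$ is injective. Combined with $X^* \succ 0$, this gives $v^{\top} Y^* v = (\projectorT v)^{\top} X^* (\projectorT v) > 0$ for every $v \neq 0$, whence $Y^* \succ 0$. The second step is routine: set $e_i := b_i - \langle \projectormap{A_i}, Y^* \rangle$, pick any $\tau > \max_i |e_i|$, and define $\bar{\lambda}'_i := \tau + \max(e_i, 0)$ and $\underline{\lambda}'_i := \tau + \max(-e_i, 0)$. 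Both are strictly positive, their difference equals $e_i$, and therefore $\langle \projectormap{A_i}, Y^* \rangle + \bar{\lambda}'_i - \underline{\lambda}'_i = b_i$, so $(Y^*, \bar{\lambda}', \underline{\lambda}')$ is a Slater point for \eqref{projected primal lambda}.

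The main obstacle is the positive-definiteness step; everything else is bookkeeping. For a dense sub-gaussian $\projector$, full row rank holds almost surely, but for the sparse Bernoulli-masked variant used throughout the paper one needs the additional mild condition that no row of $\projector$ is identically zero — an event whose failure probability is at most $\dimprojector(1-\gamma)^{\dimoriginal}$, which is negligible in the regime of interest. The conclusion should therefore be read as holding with overwhelming probability over the draw of $\projector$; under that event Slater is preserved exactly as constructed. A cosmetically shorter alternative would be to take $Y^* = I_{\dimprojector}$ directly (bypassing the rank question) and apply the same slack adjustment, but the construction above has the virtue of producing a projected Slater point that is the image of the original one, which fits naturally with the rest of the projection framework.
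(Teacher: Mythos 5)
Your construction is essentially the paper's own proof: both take the projected Slater point $\hat{Y}=\projectormap{\hat{X}}$ and re-choose strictly positive slacks to absorb the resulting discrepancy in the equality constraints. Your explicit note that $\hat{Y}\succ 0$ requires $\projector$ to have full row rank (an almost-sure event, with the sparse-projector caveat) is in fact slightly more careful than the paper, which simply asserts that the map preserves positive definiteness.
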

\begin{proof}
    Since Slater's condition of \eqref{primal lambda} holds, we know that there is a matrix $\hat{X}$ and variables $\bar{\alpha}_i, \underline{\alpha}_i$ for $i= 1, \hdots, \numconstraints$ that satisfy the following system: 
    \begin{equation*}
    \begin{cases}
        \constraintsdp  + \bar{\lambda}_i - \underline{\lambda}_i = b_i \quad i= 1, \hdots, \numconstraints, \\
        X \succ 0, \\
        \bar{\lambda}_i > 0, \  \underline{\lambda}_i > 0.
    \end{cases}
    \end{equation*}
    Recall that positive definiteness is preserved by applying the map $\phi_{\projector}$. Let $\hat{Y} = P \hat{X} P^{\top}$ and $\bar{\beta}_i - \underline{\beta}_i = \langle A_i,\hat{X} \rangle  + \bar{\alpha}_i - \underline{\alpha}_i - \langle \projectormap{A_i},\hat{Y} \rangle$. 
    Since they satisfy the following equations
    \begin{equation*}
    \begin{cases}
        \langle \projectormap{A_i},\hat{Y} \rangle  + \bar{\beta}_i - \underline{\beta}_i = b_i, \quad i= 1, \hdots, \numconstraints, \\
        \hat{Y} \succ 0 \\
        \bar{\beta}_i > 0, \  \underline{\beta}_i > 0
    \end{cases}
    \end{equation*}
    there is a point in the interior of the feasible region of \eqref{projected primal lambda}, which means that the condition also holds. 
\end{proof}
Under the assumption that \eqref{primal} is bounded and feasible and that the dual variables $y_i$ are within the interval $(\ubidual, \lbidual)$, we can show that problem \eqref{primal} and \eqref{primal lambda} have the same optimal solution.
\begin{proposition}\label{prop: equivalence of primal of bounded dual} 
    If problem \eqref{primal} is bounded and feasible and all the dual variables $y_i$ are within the bounded intervals $(\ubidual, \lbidual)$, then the following problems have the same optimal solution \\
    \hfill \\
    \begin{minipage}[t]{0.45\textwidth}
    \begin{align*}\tag*{\eqref{primal}}
        \min \quad & \objectivesdp \\
        \text{s.t.} \quad & \constraintsdp  = b_i \quad  i= 1, \hdots, \numconstraints \\
        & X \succeq 0.
    \end{align*}
    \end{minipage}
    \hfill 
    \begin{minipage}[t]{0.45\textwidth}
    \begin{align*}\tag*{\eqref{primal lambda}}
        \min \quad & \objectivesdp + \sum_{i=1}^{\numconstraints}  \bar{\lambda}_i \ubidual - \sum_{i=1}^{\numconstraints} \underline{\lambda}_i  \lbidual \\
        \text{s.t.} \quad & \constraintsdp  + \bar{\lambda}_i - \underline{\lambda}_i = b_i \quad  i= 1, \hdots, \numconstraints \\
        & X \succeq 0 \\
        & \bar{\lambda}_i \geq 0, \  \underline{\lambda}_i \geq 0.
    \end{align*}
    \end{minipage}
\end{proposition}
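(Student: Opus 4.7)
The plan is to prove equivalence of optimal values via a dual argument, and then deduce that any optimal $X^\ast$ of \eqref{primal} lifts to an optimal triple $(X^\ast,0,0)$ of \eqref{primal lambda}.

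First, I would observe the easy inequality $\text{opt}(\ref{primal lambda})\le\text{opt}(\ref{primal})$: any $X$ feasible for \eqref{primal} yields a feasible point $(X,\bar\lambda,\underline\lambda)=(X,0,0)$ for \eqref{primal lambda} with the same objective value, since the added penalty terms $\sum_i \bar\lambda_i\ubidual-\sum_i\underline\lambda_i\lbidual$ vanish. This already shows that the projected feasible region is at least as good.

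Next, for the reverse inequality I would pass through the duals. By construction, the dual of \eqref{primal lambda} is exactly \eqref{dual lambda}, which is \eqref{dual} augmented with the box constraints $\lbidual\le y_i\le\ubidual$. Since by hypothesis \eqref{primal} is bounded and feasible, strong duality gives $\text{opt}(\ref{primal})=\text{opt}(\ref{dual})$ and a dual optimizer $y^\ast$ exists; the assumption $\lbidual\le y_i^\ast\le\ubidual$ then says $y^\ast$ is feasible for \eqref{dual lambda}, so $\text{opt}(\ref{dual lambda})\ge b^\top y^\ast=\text{opt}(\ref{dual})$. Conversely every feasible point of \eqref{dual lambda} is feasible for \eqref{dual}, so $\text{opt}(\ref{dual lambda})\le\text{opt}(\ref{dual})$. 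Hence the two dual values coincide. Applying weak duality to \eqref{primal lambda} then gives
\begin{equation*}
    \text{opt}(\ref{primal})=\text{opt}(\ref{dual})=\text{opt}(\ref{dual lambda})\le\text{opt}(\ref{primal lambda}),
\end{equation*}
which combined with the first step yields equality.

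Finally, to match optimal solutions (not merely values), I would take any optimal $X^\ast$ of \eqref{primal}; the triple $(X^\ast,0,0)$ is feasible for \eqref{primal lambda} with objective $\objectivesdp|_{X=X^\ast}=\text{opt}(\ref{primal})=\text{opt}(\ref{primal lambda})$, so it is optimal there. Conversely, given an optimal $(X^\ast,\bar\lambda^\ast,\underline\lambda^\ast)$ of \eqref{primal lambda}, complementary slackness with any dual optimizer $y^\ast$ of \eqref{dual lambda} (which by the argument above can be taken to satisfy the strict interior $\lbidual<y_i^\ast<\ubidual$ whenever the original $y^\ast$ already does) forces $\bar\lambda_i^\ast=\underline\lambda_i^\ast=0$, so $X^\ast$ is feasible and optimal for \eqref{primal}.

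The main obstacle I anticipate is the rigorous invocation of strong duality in both directions: one needs Slater-type regularity for \eqref{primal lambda} so that its dual attains and matches in value, and one needs to justify that the optimal dual multiplier of \eqref{dual lambda} can be chosen in the strict interior of the box (or at least that complementary slackness forces the slacks to vanish). If the hypotheses only guarantee that some dual optimum lies in the closed box $[\lbidual,\ubidual]$, then the value-equivalence still goes through, but the "same optimal solution" claim must be phrased as "every optimal $X^\ast$ of \eqref{primal} extends to an optimal $(X^\ast,0,0)$ of \eqref{primal lambda}, and the $X$-component of any optimum of \eqref{primal lambda} is optimal for \eqref{primal}"—which is what I would ultimately prove.
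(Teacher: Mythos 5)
Your proposal is correct and follows essentially the same route as the paper's proof: both pass to the dual with box constraints, use the hypothesis that the dual optimizer lies strictly inside $(\lbidual,\ubidual)$ to conclude $\sigma\eqref{dual}=\sigma\eqref{dual lambda}$, and then invoke complementary slackness to force $\bar{\lambda}_i=\underline{\lambda}_i=0$ at optimality. Your version is simply more explicit about the weak/strong duality steps, the attainment issues, and the precise sense in which the two problems share an optimal solution, all of which the paper's shorter argument leaves implicit.
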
   
\begin{proof}
    We know by assumption that \eqref{dual} is bounded and feasible and that all dual variables $y_i$ are bounded by $(\ubidual, \lbidual)$. 
    Adding the constraints $ \lbidual \leq y_i \leq \ubidual$ to \eqref{dual} does not change the optimal value, which means that $\sigma \eqref{dual} = \sigma \eqref{dual lambda}$. 
    Now, to see that at optimality all elements of $\lambda = \{\bar{\lambda}_i, \underline{\lambda}_i\}_{i=1}^{\numconstraints}$ are equal to 0, consider the KKT conditions of \eqref{dual lambda}. 
    By the complementarity condition, either $ y_i = \ubidual$, or we must have the multiplier $\bar{\lambda}_i$ equal to $0$. 
    Since we know by construction that $ y_i \in (\lbidual, \ubidual)$, we have that $y_i \neq \ubidual$ and therefore $\bar{\lambda}_i=0$ for all $i$. 
    A similar reasoning applies to all $\underline{\lambda}_i$.  
\end{proof}

Note that Proposition \ref{prop: feasibility of lifting}, Proposition \ref{prop: feasibility of dual}, and Corollary \ref{cor: inclusion of feasible regions} also hold for \eqref{primal lambda} and \eqref{projected primal lambda}. 
However, the problem is that contrary to Proposition \ref{prop: equivalence of primal of bounded dual}, some elements of $\lambda = \{\bar{\lambda}_i, \underline{\lambda}_i\}_{i=1}^{\numconstraints}$ might not be equal to 0 at the optimal solution of \eqref{projected primal lambda}. 
This means that we cannot get a feasible solution of \eqref{primal} by solving \eqref{projected primal lambda} even though we have a feasible solution for \eqref{projected primal lambda}. 
However, this is not always the case, and for many practical applications the projected problem \ref{projected primal} is feasible for large enough projections.
However, we can establish some results and bounds of how to approximate the optimal solution of \eqref{primal lambda} (and consequently \eqref{primal} by Proposition \ref{prop: equivalence of primal of bounded dual}) by solving the projected problem \eqref{projected primal lambda}. 
To do this, we want to show that the optimal objective value of \eqref{primal} is close to the optimal objective of \eqref{projected primal lambda} up to some approximation error. 
Namely, we want to find $E$ such that 
\begin{equation}\label{bounding original objective}
    \sigma\eqref{projected primal lambda} - E \leq \sigma\eqref{primal} \leq \sigma\eqref{projected primal lambda},
\end{equation}
where $\sigma$ denotes the optimal objective value of the problems.

Recall that $\sigma\eqref{primal} = \sigma\eqref{primal lambda}$ under the assumption that all the dual variables of \eqref{primal} are bounded. 
This means that we can find $E$ for $\sigma\eqref{projected primal lambda} - E \leq \sigma\eqref{primal lambda} \leq \sigma\eqref{projected primal lambda}$ instead of \eqref{bounding original objective}. 
The right-hand side is straightforward, since for any projected solution we can find a feasible solution of the original problem with the same objective value. 
To find the left-hand side $\sigma\eqref{projected primal lambda} - E \leq \sigma\eqref{primal lambda}$, we follow a similar approach to that presented in \cite{DAmbrosio2020}, but adapted for the SDP case.
\begin{proposition}\label{prop: optimal original is better than projected}
    Let $\sigma \eqref{primal lambda}$ be the objective value of the original problem, and $\sigma \eqref{projected primal lambda}$ be the objective value of the projected problem. Then we have that 
    \begin{equation*}
        \sigma \eqref{primal lambda} \leq \sigma\eqref{projected primal lambda}.
    \end{equation*}
\end{proposition}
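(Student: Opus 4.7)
The plan is to prove the inequality by exhibiting, for any feasible point of $(\mathcal{RP}_\lambda)$, a feasible point of $(\mathcal{P}_\lambda)$ with the same objective value. Since $(\mathcal{P}_\lambda)$ is a minimization problem, this forces $\sigma(\mathcal{P}_\lambda)$ to be no larger than the optimal value of $(\mathcal{RP}_\lambda)$.

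Concretely, I would take an optimal triple $(Y^*, \bar\lambda^*, \underline\lambda^*)$ for $(\mathcal{RP}_\lambda)$ and define the lifted triple $(X^*, \bar\lambda^*, \underline\lambda^*)$ where $X^* = P^\top Y^* P$, keeping the slack variables unchanged. By Proposition \ref{prop: feasibility of lifting} (applied essentially verbatim, since the proof there only uses the linearity of the constraints under the adjoint map and the preservation of positive semidefiniteness), we have $\langle A_i, X^* \rangle = \langle P A_i P^\top, Y^* \rangle$ for every $i$ and $X^* \succeq 0$. Therefore the equality constraints of $(\mathcal{P}_\lambda)$ read
\begin{equation*}
\langle A_i, X^*\rangle + \bar\lambda_i^* - \underline\lambda_i^* \;=\; \langle PA_iP^\top, Y^*\rangle + \bar\lambda_i^* - \underline\lambda_i^* \;=\; b_i,
\end{equation*}
and the sign constraints $\bar\lambda_i^* \geq 0$, $\underline\lambda_i^* \geq 0$ are immediate. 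Hence $(X^*, \bar\lambda^*, \underline\lambda^*)$ is feasible for $(\mathcal{P}_\lambda)$.

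Next I would match the objective values. Using the cyclic property of the trace and the definition of the Frobenius inner product,
\begin{equation*}
\langle C, X^*\rangle \;=\; \langle C, P^\top Y^* P\rangle \;=\; \Tr(C^\top P^\top Y^* P) \;=\; \Tr(P C^\top P^\top Y^*) \;=\; \langle PCP^\top, Y^*\rangle.
\end{equation*}
Since the penalty terms $\sum_i \bar\lambda_i^* ub_i - \sum_i \underline\lambda_i^* lb_i$ appear identically in both objectives, the objective value of $(X^*, \bar\lambda^*, \underline\lambda^*)$ in $(\mathcal{P}_\lambda)$ coincides with the objective value of $(Y^*, \bar\lambda^*, \underline\lambda^*)$ in $(\mathcal{RP}_\lambda)$, i.e., with $\sigma(\mathcal{RP}_\lambda)$.

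Because $(X^*, \bar\lambda^*, \underline\lambda^*)$ is feasible for the minimization problem $(\mathcal{P}_\lambda)$ and achieves value $\sigma(\mathcal{RP}_\lambda)$, we conclude $\sigma(\mathcal{P}_\lambda) \leq \sigma(\mathcal{RP}_\lambda)$. There is no real obstacle here; the only point to be careful about is the trace manipulation that converts $\langle C, P^\top Y P\rangle$ into $\langle PCP^\top, Y\rangle$, which is purely algebraic. The proof is essentially a one-line reduction to Proposition \ref{prop: feasibility of lifting} together with this identity.
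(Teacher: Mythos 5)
Your proof is correct and follows essentially the same route as the paper: lift the optimal projected solution via $X^* = P^\top Y^* P$ (keeping the $\lambda$ variables unchanged), use the feasibility of the lifting together with the trace identity $\langle C, P^\top Y^* P\rangle = \langle PCP^\top, Y^*\rangle$ to match objective values, and conclude by minimality. You are somewhat more explicit than the paper about the constraint verification and the handling of the slack variables, but the argument is the same.
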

\begin{proof}
    Let $(Y^*, \lambda^*)$ be the optimal solution of the projected problem. Then, the objective value will be $\sigma\eqref{projected primal} = \langle PCP^{\top}, Y^* \rangle + \sum_{i=1}^{\numconstraints}  \bar{\lambda}_i^* \ubidual - \sum_{i=1}^{\numconstraints} \underline{\lambda}_i^*  \lbidual = \sum_{(i,j)} (PCP^{\top})_{i,j}Y^*_{i,j} + \sum_{i=1}^{\numconstraints}  \bar{\lambda}_i^* \ubidual - \sum_{i=1}^{\numconstraints} \underline{\lambda}_i^*  \lbidual $. Let $\hat{X} = P^{\top}Y^*P$. Then the objective evaluated at feasible $\hat{X}$ is $\langle C, P^{\top}Y^*P \rangle + \sum_{i=1}^{\numconstraints}  \bar{\lambda}_i^* \ubidual - \sum_{i=1}^{\numconstraints} \underline{\lambda}_i^*  \lbidual = C_{i,j} (P^{\top}V^*P)_{i,j}+ \sum_{i=1}^{\numconstraints}  \bar{\lambda}_i^* \ubidual - \sum_{i=1}^{\numconstraints} \underline{\lambda}_i^*  \lbidual = \sigma\eqref{projected primal lambda}$. 
    Since the optimal value of the projected problem can always be mapped to a feasible solution of the original problem with the same objective value, we have that $\sigma \eqref{primal lambda} \leq \sigma \eqref{projected primal lambda}$, as desired. 
\end{proof}
\begin{theorem}[Approximate Optimality]\label{thm: main projected sdp optimality theorem}
    Let $P \in \reals{\dimprojector \times \dimoriginal}$ be a RP as that of Lemma \ref{lemma: frobenious and nuclear bound}, $(X^*, \lambda^*)$ be the optimal solution of \eqref{primal lambda}, and $z^*$ be the optimal solution of \eqref{projected dual lambda}. Then, with probability at least $1-8(r_C r_{X^*} + \sum_i^{\numconstraints} r_{A_i} r_{X^*}) e^{-\mainconstant \epsilon^2 k^5 \gamma^4}$ we have 
    \begin{equation*}
        \sigma\eqref{projected primal lambda} - E \leq \sigma\eqref{primal lambda} \leq \sigma\eqref{projected primal lambda}, 
    \end{equation*}
where $E = 3 \epsilon \|C\|_F\|X^*\|_* + \sum_i^{\numconstraints} \max \{ 3 z_i^*  \epsilon \|A_i\|_F\|X^*\|_*,-  3 z_i^*  \|A_i\|_F\|X^*\|_*\}$, $r_C$ is the rank of $C$, $r_{X^*}$ is the rank of $X^*$, and $r_{A_i}$ denotes the ranks of $A_i$.
\end{theorem}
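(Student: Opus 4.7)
The right-hand inequality $\sigma\eqref{primal lambda}\le\sigma\eqref{projected primal lambda}$ is exactly Proposition \ref{prop: optimal original is better than projected}, so the work is all in the lower bound $\sigma\eqref{projected primal lambda}-E\le \sigma\eqref{primal lambda}$. The plan is to pass to the dual of the projected problem, use its strong duality, rewrite the gap as a matrix inner product, and then control that inner product by a single application (per data matrix) of Lemma \ref{lemma: frobenious and nuclear bound}.

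First, I would invoke Lemma \ref{lemm: strong duality of projected} to conclude that strong duality holds between \eqref{projected primal lambda} and \eqref{projected dual lambda}, so that $\sigma\eqref{projected primal lambda}=b^{\top}z^{*}$. Since $X^{*}$ is optimal for \eqref{primal lambda}, Proposition \ref{prop: equivalence of primal of bounded dual} gives $\lambda^{*}=0$ and $\langle A_i,X^{*}\rangle=b_i$ for every $i$, so
\begin{equation*}
\sigma\eqref{projected primal lambda}-\sigma\eqref{primal lambda}
\;=\;b^{\top}z^{*}-\langle C,X^{*}\rangle
\;=\;\sum_{i=1}^{\numconstraints} z_i^{*}\langle A_i,X^{*}\rangle-\langle C,X^{*}\rangle
\;=\;\bigl\langle \textstyle\sum_i z_i^{*}A_i-C,\;X^{*}\bigr\rangle.
\end{equation*}
The goal is then to show this quantity is at most $E$.

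The key observation is that $z^{*}$ is feasible for \eqref{projected dual lambda}, so $\sum_i z_i^{*}\projectormap{A_i}\preceq \projectormap{C}$, and $\projectormap{X^{*}}\succeq 0$ by positivity of $\phi_{\projector}$ (as in the proof of Proposition \ref{prop: feasibility of lifting}). Pairing these yields
\begin{equation*}
\sum_{i=1}^{\numconstraints} z_i^{*}\,\langle \projectormap{A_i},\projectormap{X^{*}}\rangle \;\le\; \langle \projectormap{C},\projectormap{X^{*}}\rangle .
\end{equation*}
Now I apply Lemma \ref{lemma: frobenious and nuclear bound} once to the pair $(C,X^{*})$ and once to each pair $(A_i,X^{*})$. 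On the event that all $\numconstraints+1$ applications hold, we have $|\langle A,X^{*}\rangle-\langle \projectormap{A},\projectormap{X^{*}}\rangle|\le 3\epsilon\|A\|_F\|X^{*}\|_{*}$. The sign of $z_i^{*}$ dictates which direction of the bound is needed, which is exactly why the error term in the statement is written with $\max\{3z_i^{*}\epsilon\|A_i\|_F\|X^{*}\|_{*},-3z_i^{*}\epsilon\|A_i\|_F\|X^{*}\|_{*}\}=3|z_i^{*}|\epsilon\|A_i\|_F\|X^{*}\|_{*}$. Combining the two estimates gives
\begin{equation*}
\bigl\langle \textstyle\sum_i z_i^{*}A_i-C,X^{*}\bigr\rangle
\;\le\;\underbrace{\bigl\langle \textstyle\sum_i z_i^{*}\projectormap{A_i}-\projectormap{C},\projectormap{X^{*}}\bigr\rangle}_{\le 0}
\;+\;3\epsilon\|C\|_F\|X^{*}\|_{*}
\;+\;\sum_{i=1}^{\numconstraints} 3|z_i^{*}|\epsilon\|A_i\|_F\|X^{*}\|_{*}
\;=\;E,
\end{equation*}
which is the desired bound.

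Finally, the probability comes from a union bound over the $\numconstraints+1$ invocations of Lemma \ref{lemma: frobenious and nuclear bound}: the application to $(C,X^{*})$ fails with probability at most $8 r_C r_{X^{*}} e^{-\mainconstant\epsilon^{2}k^{5}\gamma^{4}}$, and each application to $(A_i,X^{*})$ fails with probability at most $8 r_{A_i}r_{X^{*}}e^{-\mainconstant\epsilon^{2}k^{5}\gamma^{4}}$, giving the stated total failure probability. The main obstacle I anticipate is purely bookkeeping — keeping track of signs of $z_i^{*}$ so that the $\max$ form of $E$ naturally emerges, and making sure the psd inequality from projected-dual feasibility is tested against $\projectormap{X^{*}}$ (not $X^{*}$ itself) so that the inner product to be approximated pairs the projected matrices with each other; everything else is a direct combination of Lemma \ref{lemma: frobenious and nuclear bound}, Lemma \ref{lemm: strong duality of projected}, and Proposition \ref{prop: equivalence of primal of bounded dual}.
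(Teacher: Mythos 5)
Your argument is correct, and it reaches the bound by a genuinely different route than the paper. The paper's proof introduces an auxiliary perturbed projected problem \eqref{aux projected primal} whose right-hand sides absorb the projection error, shows that $(PX^*P^{\top},\lambda^*)$ is feasible for it, and then sandwiches its value between $\sigma\eqref{primal lambda}+3\epsilon\|C\|_F\|X^*\|_*$ (via Lemma \ref{lemma: frobenious and nuclear bound} applied to $(C,X^*)$) and $\sigma\eqref{projected primal lambda}-E_1$ (via weak duality of the auxiliary pair, feasibility of $z^*$ for the auxiliary dual, and Lemma \ref{lemma: frobenious and nuclear bound} applied to each $(A_i,X^*)$). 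You dispense with the auxiliary problem entirely: using strong duality of the projected pair you write the gap as the single inner product $\langle \sum_i z_i^*A_i-C,\,X^*\rangle$, approximate each term by its projected counterpart with Lemma \ref{lemma: frobenious and nuclear bound}, and kill the projected part by pairing the dual-feasibility inequality $\sum_i z_i^*\projectormap{A_i}\preceq \projectormap{C}$ against $\projectormap{X^*}\succeq 0$. This is a cleaner ``dual certificate'' argument; note, though, that the paper's use of $\sigma\eqref{projected dual lambda}=\sigma\eqref{projected primal lambda}$ is exactly the same strong-duality step you make explicit via Lemma \ref{lemm: strong duality of projected}, so neither proof avoids that ingredient, and the union bound and final error term coincide.

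One small caveat: your reduction relies on $\lambda^*=0$ and $\langle A_i,X^*\rangle=b_i$, which you obtain from Proposition \ref{prop: equivalence of primal of bounded dual}; that proposition carries the extra standing assumptions that \eqref{primal} is bounded and feasible with dual variables strictly inside $(\lbidual,\ubidual)$, whereas the paper's proof of the theorem keeps $\lambda^*$ general and needs no such assumption. This is not a real obstruction: if you drop $\lambda^*=0$, the extra terms $\sum_i z_i^*(\bar{\lambda}_i^*-\underline{\lambda}_i^*)-\sum_i\bar{\lambda}_i^*\ubidual+\sum_i\underline{\lambda}_i^*\lbidual$ are nonpositive because $\lbidual\le z_i^*\le\ubidual$ and $\lambda^*\ge 0$, so your chain of inequalities survives verbatim; it would be worth adding that one line to make the proof match the generality of the stated theorem.
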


\begin{proof}
    Let $(X^*, \lambda^*)$ be the optimal solution to \ref{primal lambda} and $Y' = \projectormap{X^*}$ be its projection. Then, we start by constructing the following auxiliary problem.  
    \begin{align*}\tag{$\mathcal{RP}_{\lambda}'$}\label{aux projected primal}
    \min \quad & \langle PCP^{\top}, Y \rangle + \sum_i^{\numconstraints}  \bar{\lambda}_i \ubidual - \sum_i^{\numconstraints} \underline{\lambda_i}  \lbidual \\
    \text{s.t.} \quad & \langle \projectormap{A_i},Y \rangle  + \bar{\lambda}_i - \underline{\lambda_i} = b_i - ((\langle A_i, X^* + \bar{\lambda}_i^* - \underline{\lambda_i}^*) \rangle - (\langle \projectormap{A_i}, Y' \rangle + \bar{\lambda}_i^* - \underline{\lambda_i}^*), \\
    & Y \succeq 0.
\end{align*}

The projected solution $(Y', \lambda^*)$ is feasible for the auxiliary problem. 
To see this, note that since $(X^*, \lambda^*)$ is feasible for \eqref{primal lambda}, $\langle A_i,X^* \rangle  + \bar{\lambda}_i^* - \underline{\lambda_i}^* = b_i$ for all $i$. 
Therefore, $\langle \projectormap{A_i},Y \rangle + \bar{\lambda}_i - \underline{\lambda_i} = b_i - b_i + \langle \projectormap{A_i}, Y' \rangle + \bar{\lambda}_i^* - \underline{\lambda_i}^* $, which is satisfied by ($Y', \lambda^*)$. 
Since $(Y', \lambda^*)$ is feasible for the auxiliary problem and we are minimising, we know that the objective value evaluated at $Y'$ is greater than or equal to the optimal objective of the auxiliary problem, namely that
\begin{equation*}
    \langle PCP^{\top},Y' \rangle + \sum_i^{\numconstraints}  \bar{\lambda}_i^* \ubidual - \sum_i^{\numconstraints} \underline{\lambda_i}^*  \lbidual  \geq \sigma\eqref{aux projected primal}.
\end{equation*}
Now, we want to find $M_1, M_2$ such that 
\begin{equation*}
    M_1 \geq \langle PCP^{\top},Y' \rangle + \sum_i^{\numconstraints}  \bar{\lambda}_i^* \ubidual - \sum_i^{\numconstraints} \underline{\lambda_i}^*  \lbidual \geq \sigma\eqref{aux projected primal} \geq M_2.
\end{equation*}
To find $M_1$, note that from Lemma \ref{lemma: frobenious and nuclear bound}, $\langle PCP^{\top},Y' \rangle \leq \langle C,X^* \rangle + 3 \epsilon \|C\|_F\|X^*\|_*$ with probability at least $1-8(r_C r_{X^*}) e^{-\mainconstant \epsilon^2 k^5 \gamma^4}$, where $\langle C,X^* \rangle + \sum_i^{\numconstraints}  \bar{\lambda}_i^* \ubidual - \sum_i^{\numconstraints} \underline{\lambda_i}^*  \lbidual = \sigma\eqref{primal lambda}$ and $\lambda^* \geq 0$. 
Therefore, we have that 
\begin{equation*}
    M_1 = \sigma\eqref{primal lambda} + 3 \epsilon \|C\|_F\|X^*\|_*.
\end{equation*}
Now, to find $M_2$ consider the dual of \eqref{aux projected primal}:
\begin{align*}\tag{$\mathcal{RD}_{\lambda}'$}\label{aux projected dual}
    \max \quad & \sum_i^{\numconstraints} y_i (b_i - ((\langle A_i, X^* \rangle + \bar{\lambda}_i^* - \underline{\lambda_i}^*) - (\langle \projectormap{A_i}, Y' \rangle + \bar{\lambda}_i^* - \underline{\lambda_i}^*)) \\
    \text{s.t.} \quad & \sum_{} \projectormap{A_i} y_i \preceq PC P^{\top} \\
    & \lbidual \leq y_i \leq \ubidual. \\
\end{align*}
By weak duality of the auxiliary problem, we have that $\sigma\eqref{aux projected primal} \geq \sigma\eqref{aux projected dual}$. 
Now, the original projected dual \eqref{projected dual lambda} and the auxiliary projected dual \eqref{aux projected dual} have the same feasible region, which means that the optimal solution of the projected dual $z^*$ is feasible for the auxiliary dual. 
This implies that $\sigma\eqref{aux projected dual} \geq \sum_i^{\numconstraints} z_i^* (b_i - ((\langle A_i, X^* \rangle + \bar{\lambda}_i^* - \underline{\lambda_i}^*) - (\langle \projectormap{A_i}, Y' \rangle + \bar{\lambda}_i^* - \underline{\lambda_i}^*)))$. Therefore, we have that 
\begin{align*}
    \sigma\eqref{aux projected primal} \geq  & \sum_i^{\numconstraints} z^*_i (b_i - ((\langle A_i, X^* \rangle + \bar{\lambda}_i^* - \underline{\lambda_i}^*) - (\langle \projectormap{A_i}, Y' \rangle + \bar{\lambda}_i^* - \underline{\lambda_i}^*))) \\
    = & \ \sigma\eqref{projected dual lambda} - \sum_i^{\numconstraints} z^*_i (\langle A_i, X^* \rangle - \langle \projectormap{A_i}, Y' \rangle ) \\
    = & \ \sigma\eqref{projected primal lambda} - \sum_i^{\numconstraints} z^*_i (\langle A_i, X^* \rangle - \langle \projectormap{A_i}, Y' \rangle )
\end{align*}
Now, by Lemma \ref{lemma: frobenious and nuclear bound}, $|\langle A_i, X^* \rangle - \langle \projectormap{A_i}, Y' \rangle |$ is bounded by $3 \epsilon \|A_i\|_F\|X^*\|_* $ for each $i$ with probability at least $1-8 (r_{A_i} r_{X^*}) e^{-\mainconstant \epsilon^2 k^5 \gamma^4}$. 
Following this, we have that 
\begin{equation*}
    M_2 = \sigma\eqref{projected primal lambda}) - E_1
\end{equation*}
where $E_1 = \sum_i\max \{ 3 z_i^*  \epsilon \|A_i\|_F\|X^*\|_*, - 3 z_i^*  \epsilon \|A_i\|_F\|X^*\|_*\}$.
Note that $E_1$ is never non-negative. If the dual variable $z_i^*$ is negative, then the right-hand side term of $E_1$ will be chosen. 
On the other hand, if $z_i^*$ is positive, then the left term will be chosen. 
Adding everything together we get that 
\begin{equation*}
    \sigma\eqref{primal lambda} + 3 \epsilon \|C\|_F\|X^*\|_*
    \geq \sigma\eqref{projected primal lambda} - E_1
\end{equation*}
with probability at least $1-8(r_C r_{X^*} + \sum_i^{\numconstraints} r_{A_i} r_{X^*}) e^{-\mainconstant \epsilon^2 k^5 \gamma^4}$.
Therefore, we obtain the desired result that 
\begin{equation*}
    \sigma\eqref{primal lambda} \geq \sigma\eqref{projected primal lambda} - E
\end{equation*}
for $E =  3 \epsilon \|C\|_F\|X^*\|_* + E_1$ with probability at least $1-8(r_C r_{X^*} + \sum_i^{\numconstraints} r_{A_i} r_{X^*}) e^{-\mainconstant \epsilon^2 k^5 \gamma^4}$.
\end{proof}

Recall that since \eqref{primal lambda} and \eqref{primal} share the same objective, this bound also applies to the optimal value of \eqref{primal}. 
This is captured in the following corollary. 
\begin{corollary} \label{cor: optimality bounds of original problem}
    Let $P \in \reals{\dimprojector \times \dimoriginal}$ be a RP as that of Lemma \ref{lemma: frobenious and nuclear bound}, $(X^*, \lambda^*)$ be a solution to \eqref{primal lambda}, and $z^*$ be a solution of \eqref{projected dual lambda}. Then, with probability at least $1-8(r_C r_{X^*} + \sum_i^{\numconstraints} r_{A_i} r_{X^*}) e^{-\mainconstant \epsilon^2 k^5 \gamma^4}$ we have 
    \begin{equation*}
        \sigma\eqref{projected primal lambda} - E \leq \sigma\eqref{primal} \leq \sigma\eqref{projected primal lambda}, 
    \end{equation*}
where $E = 3 \epsilon \|C\|_F\|X^*\|_* + \sum_i^{\numconstraints} \max \{ 3 z_i^*  \epsilon \|A_i\|_F\|X^*\|_*,-  3 z_i^*  \|A_i\|_F\|X^*\|_*\}$, and $r_C$ is the rank of $C$, $r_{X^*}$ is the rank of $X^*$, and $r_{A_i}$ are the rank of $A_i$.
\end{corollary}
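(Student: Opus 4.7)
The plan is to derive this corollary essentially as an immediate consequence of Theorem \ref{thm: main projected sdp optimality theorem} combined with Proposition \ref{prop: equivalence of primal of bounded dual}. Theorem \ref{thm: main projected sdp optimality theorem} already establishes the two-sided bound
\begin{equation*}
    \sigma\eqref{projected primal lambda} - E \leq \sigma\eqref{primal lambda} \leq \sigma\eqref{projected primal lambda}
\end{equation*}
with probability at least $1-8(r_C r_{X^*} + \sum_i^{\numconstraints} r_{A_i} r_{X^*}) e^{-\mainconstant \epsilon^2 k^5 \gamma^4}$, so the only remaining step is to replace $\sigma\eqref{primal lambda}$ with $\sigma\eqref{primal}$ in this chain of inequalities.

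First I would invoke the hypothesis (implicit in the corollary's setup) that \eqref{primal} is bounded and feasible and that all dual variables $y_i$ lie strictly inside the intervals $(\lbidual, \ubidual)$, which are exactly the assumptions required by Proposition \ref{prop: equivalence of primal of bounded dual}. Under these assumptions, Proposition \ref{prop: equivalence of primal of bounded dual} guarantees that the slack-augmented problem \eqref{primal lambda} and the original \eqref{primal} share the same optimal value, namely $\sigma\eqref{primal lambda} = \sigma\eqref{primal}$. The argument there proceeds via the KKT conditions: the complementarity conditions for the box constraints $\lbidual \leq y_i \leq \ubidual$ force every slack $\bar{\lambda}_i, \underline{\lambda}_i$ to vanish at optimality whenever the dual variables lie in the interior of their bounding intervals.

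Substituting this equality directly into the conclusion of Theorem \ref{thm: main projected sdp optimality theorem} then yields
\begin{equation*}
    \sigma\eqref{projected primal lambda} - E \leq \sigma\eqref{primal} \leq \sigma\eqref{projected primal lambda}
\end{equation*}
with exactly the same error term $E = 3 \epsilon \|C\|_F\|X^*\|_* + \sum_i^{\numconstraints} \max \{ 3 z_i^*  \epsilon \|A_i\|_F\|X^*\|_*,-  3 z_i^*  \|A_i\|_F\|X^*\|_*\}$ and the same probability estimate, as desired. Since the probability event is the one already controlled in the proof of Theorem \ref{thm: main projected sdp optimality theorem} (the joint event that Lemma \ref{lemma: frobenious and nuclear bound} holds for the pair $(C, X^*)$ and for each pair $(A_i, X^*)$), no additional union bound or probabilistic estimate is needed.

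There is essentially no hard step here; the only point that deserves a moment of care is to verify that the assumption set stated in the corollary suffices to trigger Proposition \ref{prop: equivalence of primal of bounded dual} (so that the equality $\sigma\eqref{primal} = \sigma\eqref{primal lambda}$ is legitimate), and to confirm that the high-probability event from Theorem \ref{thm: main projected sdp optimality theorem} is the one inherited by the corollary. Both are routine, so the proof collapses to one or two lines citing the preceding theorem and proposition.
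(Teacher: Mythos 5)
Your proposal is correct and matches the paper's own (very brief) justification: the paper derives the corollary exactly as you do, by combining the bound of Theorem \ref{thm: main projected sdp optimality theorem} with the identity $\sigma\eqref{primal} = \sigma\eqref{primal lambda}$ from Proposition \ref{prop: equivalence of primal of bounded dual}, under the implicit assumption that the dual variables lie strictly inside their bounds. Your added remark that the high-probability event is inherited unchanged, with no further union bound, is also consistent with the paper.
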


It is important to recall again that by solving \eqref{projected primal lambda} we do not necessarily get a feasible solution for \eqref{primal}. 
Moreover, we need to assume that the dual variables of the original problem are bounded and we need to know these bounds in advance, which is not always straightforward. 
However, there are some applications such as polynomial optimization that satisfy this assumption and where having a feasible solution to the original problem is not necessary. 
For most practical applications computational experiments show that if the projected dimension is large enough and the projector is dense enough the projection is almost always feasible.
However, knowing this in advance for general problems is hard. 
There are some applications of SDP relaxations of QCQP such as \textsc{maxcut} and \textsc{max-2-sat} where the projected problem is almost always feasible. 
This allows us to not need dual bounds in advance, and also to retrieve a feasible solution for the original problem after projection. 

Lastly, we want to present an updated lower bound on $\dimprojector$ that ensures that the probability of Corollary \ref{cor: optimality bounds of original problem} stays close enough to 1.
Assume we want to probability to be at least $1-\delta$, then we have that 
\begin{equation*}
    \dimprojector \geq \left(\frac{\ln(8(r_C r_{X^*} + \sum_i^{\numconstraints} r_{A_i} r_{X^*})/\delta))}{\mainconstant \epsilon^2 \gamma^2}\right)^{\frac{1}{5}}.
\end{equation*}

\section{Computational Complexity}\label{sec: computational complexity}
In this section, we aim to answer the question of how much it is saved in terms of computational complexity when solving the projected problem. 
We compare the complexity of solving the two problems using IPMs, but we also take into account the complexity of projecting down the original problem.
It is important to point out that we are comparing the time complexity of the original problem and an approximation.
However, knowing how much faster the projected problem is can still be very valuable, even if the solution quality is worse. 
Big saving of the projected problem, even at the cost of accuracy, can motivate the use of random projections embedded into iterative algorithms. 

Let us start by considering the differences in complexity when solving a semidefinite programming problem and its projection using IPMs. 
For this solution approach and a problem of matrix size $\dimoriginal$ and $m$ constraints, the complexity of each iteration is $\bigO(m \dimoriginal^3 + m^2 \dimoriginal^2 + m^3)$, and we need $\bigO(\sqrt{\dimoriginal} \log(1/\epsilon{}))$ iterations to get an $\epsilon$ solution. 
Note that we do not assume that $\dimoriginal > m$, since for some applications such as polynomial optimization we might have more constraints than the size of the matrix variable. 
This means that it takes 
\begin{equation}\label{original complexity IPM}
    \bigO((m\dimoriginal^{3.5} + m^2\dimoriginal^{2.5} + \sqrt{\dimoriginal}m^3) \log(1/\epsilon{}))
\end{equation}
to solve the problem to $\epsilon$ accuracy. 
Recall that the projected problem has a variable matrix $Y \in \reals{\dimprojector \times \dimprojector}$ and $m$ constraints. 
However, since  $\dimprojector = \bigO(\ln \dimoriginal)$ the complexity of solving the projected problem to $\epsilon$ accuracy is 
\begin{equation}\label{projected complexity IPM}
    \bigO((m\ln^{3.5}\dimoriginal + m^2\ln^{2.5}\dimoriginal + \sqrt{\ln \dimoriginal}m^3) \log(1/\epsilon{})).
\end{equation}
Now, it is necessary to clarify that the accuracy $\epsilon$ in \eqref{original complexity IPM} and \eqref{projected complexity IPM} does not refer to the same solution. Since the projected problem is an approximation of the original problem, \eqref{projected complexity IPM} refers to the complexity of getting $\epsilon$ accuracy with respect to the approximation,  whose value is worse than that of the original SDP. 

Moreover, we also need to take into account the  complexity of projecting all the matrices in the problem.
The methodology proposed in this paper can be broken down into three main steps: sampling the random projector, projecting all the matrices of the problem with this projector, and solving the projected problem. 
The computational complexity of sampling the projector $\projector$ is roughly $\bigO(1)$ per entry, which makes the total complexity $\bigO(\dimprojector \dimoriginal)$ for the projected dimension $\dimprojector$ and the original dimension $\dimoriginal$. We do not need to perform a matrix transpose operator to get $\projectorT$ since we can just swap the indices of entries of $P$.
For a problem with $m$ constraints we need to project $m+1$ matrices of size $\dimoriginal \times \dimoriginal$. 
Matrices are projected using the positive map $ \phi_{\projector}:  \reals{\dimoriginal \times \dimoriginal} \rightarrow  \reals{\dimprojector \times \dimprojector}$ that maps $ A \in \reals{\dimoriginal \times \dimoriginal}$ to $ \projectormap{A} \in \reals{\dimprojector \times \dimprojector}$.
The complexity of applying this map is $\bigO(\dimprojector \dimoriginal^2 + \dimoriginal \dimprojector^2)$.
Since we have to project $(m+1)$ matrices of size $\dimoriginal \times \dimoriginal$, the total complexity is $\bigO((m+1)(kn^2+nk^2))$. 
If matrix $P$ has $r$ non-zero entries per column and $P^{\top}$ has $z$, then the complexity of $PA$ is $\bigO(knr)$ and the complexity of $\projectormap{A}$ is $\bigO(knr + knz)$.
Again, since we have to project $m + 1$ of these matrices, the total complexity of projecting using a sparse projector is $\bigO((m+1)(knr + knz))$. 
Note that for some applications such as \textsc{maxcut} and \textsc{Max-2-sat}, the number of non-zero entries per column is 1, so the complexity is even smaller. 
For this case, projecting a matrix $A_i$ with 0s everywhere except a 1 in entry $A_{ii}$ is equivalent to taking the outer product of the $i$th columns of $\projector$. 
The complexity of the outer product of two vectors is $\bigO(n^2)$ where $n$ is the length of the vector. When the vector is sparse and only $r$ entries are non-zero, the complexity is $\bigO(r^2)$.

Adding everything together we have that the original complexity to achieve $\epsilon$ accuracy (for the original problem) is 
\begin{equation}
    \bigO((m\dimoriginal^{3.5} + m^2\dimoriginal^{2.5} + m^3\sqrt{\dimoriginal}) \log(1/\epsilon{}))
\end{equation}
while the complexity to solve the projection up to $\epsilon$ (with respect to the projected objective, which is known to be close to the original one with high probability) is 
\begin{align}
    \bigO(\dimprojector \dimoriginal) + \bigO((m+1)(kn^2+nk^2)) + \bigO((m\ln^{3.5}\dimoriginal + m^2\ln^{2.5}\dimoriginal + m^3\sqrt{\ln \dimoriginal}) \log(1/\epsilon{})) \\
    =
    \bigO(m \dimoriginal^2 \ln \dimoriginal ) + \bigO((m\ln^{3.5}\dimoriginal + m^2\ln^{2.5}\dimoriginal +  m^3\ln^{0.5}\dimoriginal) \log(1/\epsilon{}))
\end{align}
Also, recall that to retrieve a feasible solution to the original problem we need to apply the inverse of the congruence map. 
However, this does not change the complexity since it would be $\bigO((m+1)(kn^2+nk^2))$, which is already considered. 

By using \textit{soft-O} notation, which hides polylogarithmic factors, we are able to compare them easier. 
For the original problem, the complexity remains the same, i.e. $\Tilde{\bigO}((m\dimoriginal^{3.5} + m^2\dimoriginal^{2.5} + m^3\dimoriginal^{0.5}) \log(1/\epsilon{}))$.
However, for the projected case, the complexity reduces to 
\begin{equation}
    \Tilde{\bigO}(m \dimoriginal^2) + \Tilde{\bigO}(m^3\log(1/\epsilon{})).
\end{equation}
Note that for the projection method, we have the constant term $\Tilde{\bigO}(m \dimoriginal^2)$, which comes from the matrix multiplication before projection. 
In reality, since the projector is sparse, the complexity of matrix multiplication can be reduced. 
Moreover, computational experiments show that we can usually increase the sparsity parameter sub-linearly with respect to the dimension.
This means that the larger the dimension, the sparser we can make the projector, and this relation does not seem to show a linear behavior. 
Our belief is that it might be logarithmic. 
However, proving this formally is out of the scope of this paper. 
Lastly, it is important to bear in mind that one of the main drawbacks of interior point methods is running into memory issues. These memory problems can be drastically reduced by the use of random projections.

\section{Computational Experiments}\label{sec: computational experiments}

We now present some computational experiments and test our method on different applications. 
Recall that the optimality bound is additive in terms of the constraints, and depends on the nuclear and Frobenious norm of the data matrices and the solution matrix.
This means that projections of SDP problems with a small number of constraints and data matrices with a small nuclear norm or small Frobenious norm will work better. 
Following this, we apply our method to the semidefinite relaxations of \textsc{maxcut} and \textsc{max-2-sat}, since the constraint data matrices of these problems have nuclear and Frobenious norm equal to one. 
The norm of the objective matrix $C$ is problem-dependent, but one can estimate its dimension depending on the parameters of the problems such as the number of variables and the ratio of clauses for \textsc{max-2-sat}, or the number of vertices and the density of the graph for \textsc{maxcut}. 
For these problems, we show that a projected SDP relaxation that considerably reduces the dimension of the SDP problem can be solved while getting reasonable bounds for the original problems. 
We also explore applications of our method to binary polynomial optimization problems. 
In particular, we show that it can be applied to solve the second level of the Lasserre Hierarchy for some instances of the Stable Set Problem.

We present \textsc{maxcut} results for two types of graphs. 
First, we explore the G-set dataset taken from \cite{gset}. However, this first experiment shows that our method performs much better on unweighted graphs and that the larger the graph, the better the quality of the projection. 
Following this, we also explore some larger unweighted random graphs generated using the rudy\footnote{\href{https://www-user.tu-chemnitz.de/∼helmberg/rudy.tar.gz}{https://www-user.tu-chemnitz.de/$\sim$helmberg/rudy.tar.gz}} graph generator by G. Rinaldi. 
For \textsc{max-2-sat}, we use randomly generated formulas generated from a DIMACS\footnote{\href{https://dimacs.rutgers.edu/pub/challenge/satisfiability/cotributed/selman/}{https://dimacs.rutgers.edu/pub/challenge/satisfiability/cotributed/selman/}} challenge. 
For the stable set problem, we study the complement of generalised Petersen graphs, the complement of Helm graphs, the complement of Jahangir's graphs, and the complement of the graphs presented in \cite{Dobre2015}. 
All computational experiments were conducted on a Dell PowerEdge R740 server, equipped with 4 Intel Gold 6234 processors of 3.3GHz, 8 cores, and 16 threads. 
The main implementation is in Python \cite{python}, and we use Mosek as the SDP solver with default configuration \cite{mosek}.  

\subsection{Semidefinite Relaxations}
Semidefinite programming programs are particularly useful when approximating hard classes of optimization problems such as non-convex quadratic programs \cite{Fujie1997, Boyd1997, Nesterov2000}. 
These relaxations are more powerful than linear programming (LP) relaxations and often provide a good approximation of the original problem. 
In this subsection, we present computational results for two of these problems: \textsc{maxcut} and \textsc{max-2-sat}.
The maximum cut problem (\textsc{maxcut}) is the problem of finding the partition of a graph where the number of edges connecting the two new sets is maximised. This problem is known to be NP-hard \cite{Lewis1983}. Let $G = (E, V)$ be a non-directed graph where $E$ is the set of edges and $V$ is the set of vertices. We define the variables $x_v$ for $v \in V$, and $z_{uv}$ for $uv \in E$. We want to maximize the number of edges connecting vertices belonging to different sets after we cut the set by half. The SDP relaxation of \textsc{maxcut} is 
\begin{align*}
\max \ & \langle L, X \rangle\\
\text{s.t.} \ & \langle A_i, X \rangle = 1 \quad i = 1, \dots, n \\
& X \succeq 0
\end{align*} 
where $L \in \reals{\dimoriginal \times \dimoriginal}$ is the Laplacian matrix of the graph, and $A_i$ are the matrices that makes sure that the diagonal element ith of $X$ is equal to one. 
On the other hand, the maximum 2-satisfiability problem (\textsc{max-2-sat}) seeks to find an assignment to the variables of a logic formula such that the maximum number of clauses is satisfied, where we have at most two variables per clause. An example of such a formula with three clauses and 5 variables is:
\begin{equation}
    (x_1 \lor \neg x_3 ) \land (x_2 \lor x_4) \land (\neg x_1 \lor x_5)
\end{equation}
The formulation of the SDP relaxation of \textsc{max-2-sat} is very similar to that of \textsc{maxcut}. The only difference is that instead of $L$ we have a matrix $W$ that represents the sum of the clauses in the formula. 
Both \textsc{maxcut} and \textsc{max-2-sat} share the same feasible region, i.e. psd matrices with 1s in the diagonal. 
Applying the projection $\projectormap{A_i}$ to these data matrices is equivalent to taking the outer product of the $i$th column of the projector $P$. 
This makes computing the projections much more efficient, and also allows us to investigate the feasibility of the projected problem in more detail. 
Moreover, the larger the problem size, the sparser we can make the projector without getting an infeasible projection. 
Following this, we use different projectors depending on the problem size: the classic sparse Achlioptas projector \cite{Achlioptas2001}, which has a density of 0.3, is used for problems of size 800 and the sparse projector from \cite{DAmbrosio2020} with densities 0.05 and 0.03 is used for matrices of size 5000 and 7000, respectively. 

We start testing \textsc{maxcut} on the standard G-set library, a collection of 67 medium to large-scale graphs \cite{gset}. 
Results for G-set graphs with 800 nodes are shown in Table \ref{tab: G-800}. 
Graphs G1 to G5 are random graphs with 800 vertices, unit weights and a density of 6\%. Graphs G6 to G10 are the same graphs with edge weights taken randomly from \{-1, 1\}.
Graphs G11 to G13 are toroidal grids that also have random edge weights from \{-1, 1\} and 800 vertices, while G14 to G17 are composed of the union of two (almost maximal) planar graphs with unit weights.
Graphs G18 to G21 are 800 vertices planar graphs but with random \{-1, 1\} edge weights.
The rest of the graphs in the dataset follow the same generation pattern
 but with more nodes. 
In this paper, we only present the results for graphs of size 800 since the insights we get from larger graphs are similar.

\begin{table}
    \captionof{table}{Relative time (\%) and relative quality (\%) with respect to the original relaxation of \textsc{maxcut} of G-graphs with 800 nodes using an Achlioptas projector \cite{Achlioptas2001}} 
    \begin{tabular}{lrrrrrrrrrrrrrrr} 
        \toprule
        & && \multicolumn{2}{c}{10\% projection} && \multicolumn{2}{c}{20\% projection} \\
        \cmidrule{4-5} \cmidrule{7-8}
        \rule{0pt}{10pt} 
        Instance  & $m$ && Time & Quality && Time & Quality \\
        \midrule
        G1 & 19176 && 52.78 &  82.79  && 261.86 &  86.99  \\
        G2 & 19176 && 46.28 &  82.61  && 222.94 &  86.63  \\
        G3 & 19176 && 42.57 &  82.58  && 259.63 &  86.65  \\
        G4 & 19176 && 49.62 &  82.56  && 273.76 &  86.44  \\
        G5 & 19176 && 47.65 &  82.51  && 200.39 &  86.67  \\
        \hline
        G6 & 19176 && 52.67 &  18.52  && 255.35 &  37.09  \\
        G7 & 19176 && 55.69 &  14.33  && 238.72 &  34.08  \\
        G8 & 19176 && 56.10 &  13.16  && 384.84 &     33.17  \\
        G9 & 19176 && 62.13 &  14.96  && 316.96 &     34.67  \\
        G10 & 19176 && 45.42 &  14.44  && 292.81 &     34.33  \\
        \hline
        G11 & 1600 && 51.93 &  22.17  && 243.63 &     44.71  \\
        G12 & 1600 && 56.37 &   19.97   && 313.99 &     42.88  \\
        G13 & 1600 && 47.95 &  20.74   && 319.77 &     42.62  \\
        \hline
        G14 & 4694 && 45.97 &   79.77  && 262.67 &     85.80  \\
        G15 & 4661 && 43.82 &   79.53   && 191.34 &     85.61  \\
        G16 & 4672 && 47.32 &     79.98  && 233.20 &     85.94  \\
        G17 & 4667 && 39.36 &     79.74  && 197.72 &     85.89  \\
        \hline
        G18 & 4694 && 54.25 &   20.80  && 242.70 &     40.18  \\
        G19 & 4661 && 46.54 &     14.44   && 231.37 &     35.33  \\
        G20 & 4672 && 51.99 &     15.59  && 236.06 &     37.68  \\
        G21 & 4667 && 53.43 &     14.63  && 216.14 &     36.51  \\
        \bottomrule
    \end{tabular}
    \label{tab: G-800}
\end{table}

In all the computational experiments we present results in terms of relative time and relative quality percentages. 
Relative time of the projected problem is the proportion of time taken to solve the projected problem compared to solving the original SDP problem. 
For example, a relative time of 5\% implies that we can solve the projected problem in 5\% of the time taken to solve the original sdp.
Relative quality measures the quality of the bound obtained with the projection with respect tot he quality of the original sdp. 
From these experiments, we can see that our method performs much better on unweighted graphs. For graphs G1 to G5, which are random unweighted graphs with a density of 6\%, we can solve a problem whose variable matrix is a tenth of the size of the original relaxation while getting an approximation quality of over 80\%. For the other types of unweighted graphs, from G14 to G17, similar results are obtained. However, results for weighted graphs are not as positive. This might be because the norm of the Laplacian matrix of weighted graphs is larger. Since the computational times with the Achlioptas projector were generally slower than those of sparse matrices, we switch to using the projectors proposed by D'Ambrosio et al. in \cite{DAmbrosio2020} for the rest of the experiments. The reason we have not presented those results directly in Table \ref{tab: G-800} is so that they can be compared with future performance. 

Following this, we now present in Table \ref{tab: main rudy-5000} results for unweighted graphs of dimension 5000 generated using rudy. 
We show results for different graphs generated with two different seeds for densities of 5\% or 10\%, 15\%, 20\%, 30\%, 40\% and 50\%. 
We present analogous results for dimension 7000 in Table \ref{tab: main rudy-7000}. 
For these, we just present results for 10\% since this projection has more computational savings. 
We can get an approximation quality of over 95\% by solving the problem in a third of the time for 5000 node graphs and half of the time for 5000 node graphs.
The longer time for larger graphs might be due to the tuning of the sparsity parameter. As we will see later in Table \ref{tab: sparsity}, when the projector is tuned correctly we can solve the problem with high accuracy a fourth of the time even for large graphs. 

In tables \ref{tab: main rudy-5000} and \ref{tab: main rudy-7000} we can see how random projections seem to work better for denser graphs. 
Overall, for the same density, the projection quality is better for larger graphs. This is illustrated better in Figure \ref{fig: quality}, which shows the quality of the projections for several graphs with density 30\% and increasing size. 
These results are taken from an average of 5 different graphs for each size. 

From current results, we can assume that the quality of projections will keep increasing, or at least be maintained, for even larger graphs. 
Very large graphs cannot be directly solved using Mosek since it runs into memory problems so they are harder to compare. 
We can instead use random projections to solve large-scale \textsc{maxcut} problems generated by rudy with a good approximation quality. 
However, the sparsity parameter of the projector is key to getting a feasible approximation that is solved in a reasonable amount of time. 
Too dense projectors might lead to too long computational times. 
An analysis of this sparsity parameter for a 10k nodes graph is presented in Table \ref{tab: sparsity}. 

\begin{table}[!htbp]
        \captionof{table}{Relative time (\%) and relative quality (\%) with respect to the original relaxation of \textsc{maxcut} of 5000 vertices rudy graphs of different densities (\%) for 10\% projection using a 5\% sparse projector from D'Ambrosio et al. \cite{DAmbrosio2020}}
        \begin{tabular}{lrrrrrrrrrrrrrrr} 
            \toprule
            & & && \multicolumn{2}{c}{10\% projection} \\
            \cmidrule{5-6}
            \rule{0pt}{10pt} 
            Instance & $m$ & Density && Time & Quality \\
            \midrule
             mcp-5000-5-1 &     624875 & 5                  & & 37.53 & 91.16 \\
             mcp-5000-5-2 &     624875 & 5                  & & 33.07 & 91.14 \\
             mcp-5000-10-1 &    1249750 & 10                 & & 30.88 & 93.69 \\
             mcp-5000-10-2 &    1249750 & 10                 & & 24.32 & 93.68 \\
             mcp-5000-15-1 &    1874625 & 15                 & & 29.83 & 94.89 \\
             mcp-5000-15-2 &    1874625 & 15                 & & 26.15 & 94.90 \\
             mcp-5000-20-1 &    2499500 & 20                 & & 34.59 & 95.67 \\
             mcp-5000-20-2 &    2499500 & 20                 & & 27.68 & 95.68 \\
             mcp-5000-30-1 &    3749250 & 30                 & & 27.41 & 96.65 \\
             mcp-5000-30-2 &    3749250 & 30                 & & 28.91 & 96.66 \\
             mcp-5000-40-1 &    4999000 & 40                 & & 32.97 & 97.29 \\
             mcp-5000-40-2 &    4999000 & 40                 & & 29.05 & 97.30 \\
             mcp-5000-50-1 &    6248750 & 50                 & & 23.72 & 97.78 \\
             mcp-5000-50-2 &    6248750 & 50                 & & 29.66 & 97.78 \\
            \bottomrule
        \end{tabular}
        \label{tab: main rudy-5000}
    \end{table}

\begin{table}[!htbp]
    \captionof{table}{Relative time (\%) and relative quality (\%) with respect to the original relaxation of \textsc{maxcut} of 7000 vertices rudy graphs of different densities (\%) for 10\% projection using a 4\% sparse projector from D'Ambrosio et al. \cite{DAmbrosio2020}} 
    \begin{tabular}{lrrrrrrrrrrrrrrr} 
        \toprule
        & & && \multicolumn{2}{c}{10\% projection} \\
        \cmidrule{5-6}
        \rule{0pt}{10pt} 
        Instance & $m$ & Density && Time & Quality \\
        \midrule
             mcp-7000-5-1 &    1224825 & 5                  & & 54.24 & 92.62 \\
             mcp-7000-5-2 &    1224825 & 5                  & & 61.82 & 92.61 \\
             mcp-7000-10-1 &    2449650 & 10                 & & 64.24 & 94.77 \\
             mcp-7000-10-2 &    2449650 & 10                 & & 56.90 & 94.76 \\
             mcp-7000-15-1 &    3674475 & 15                 & & 55.91 & 95.78 \\
             mcp-7000-15-2 &    3674475 & 15                 & & 68.48 & 95.79 \\
             mcp-7000-20-1 &    4899300 & 20                 & & 55.70 & 96.43 \\
             mcp-7000-20-2 &    4899300 & 20                 & & 54.67 & 96.43 \\
             mcp-7000-30-1 &    7348950 & 30                 & & 56.24 & 97.24 \\
             mcp-7000-30-2 &    7348950 & 30                 & & 54.33 & 97.24 \\
             mcp-7000-40-1 &    9798600 & 40                 & & 54.45 & 97.78 \\
             mcp-7000-40-2 &    9798600 & 40                 & & 48.72 & 97.77 \\
             mcp-7000-50-1 &   12248250 & 50                 & & 35.32 & 98.18 \\
             mcp-7000-50-2 &   12248250 & 50                 & & 53.69 & 98.17 \\
        \bottomrule
    \end{tabular}
    \label{tab: main rudy-7000}
\end{table}

\begin{table}[!htbp]
    \captionof{table}{Comparison of relative time (\%) and relative quality (\%) with respect to the original SDP relaxation for different sparsity parameters (\%) of the sparse projectors from D'Ambrosio et al. \cite{DAmbrosio2020} for a 5\% and 10\% projection of a 10.000 nodes  rudy graph with 20\% edge density, where infeasibility is denoted by -}
    \begin{tabular}{rrrrrrrrrrrrrrrr} 
    \toprule
    && \multicolumn{2}{c}{5\% projection} && \multicolumn{2}{c}{10\% projection} \\
    \cmidrule{3-4}\cmidrule{6-7}
    \rule{0pt}{10pt} 
        Sparsity && Time & Quality && Time & Quality \\
        \midrule
        99 &&    - & - &&        - &       -  \\
        98 &&    - &  - &&    27.80 &    96.64 \\
        97 &&    - &   - &&    39.29 &    97.04 \\
        96 &&    - & - &&    79.11 &    97.13 \\
        95 &&    - & - &&   114.72 &    97.16 \\
        94 &&    - &     - &&    80.08 &    97.18 \\
        93 &&    24.91 &    96.37 &&   101.40 &    97.20 \\
        92 &&    23.57 &    96.50 &&   157.81 &    97.21 \\
    \bottomrule
    \end{tabular}
    \label{tab: sparsity}
\end{table}

It is also interesting to see the results of \textsc{max-2-sat} since even though the feasible region is the same (meaning that the same feasibility argument applies), the objective matrix is different. 
Results for randomly generated formulas generated from a DIMACS\footnote{\href{https://dimacs.rutgers.edu/pub/challenge/satisfiability/cotributed/selman/}{https://dimacs.rutgers.edu/pub/challenge/satisfiability/cotributed/selman/}} challenge are presented in Table \ref{tab: maxsat}. 
We can see how we can get an approximation of around 70\% for most cases by projecting to either 10\% or 20\% of the original size. 

    \begin{table}[!htbp]
        \centering
        \captionof{table}{Relative time (\%) and relative quality (\%) with respect to the original relaxation of \textsc{max-2-sat} for formulas with different number of clauses $n$ and clause to variable ratio $C$ with sparse projectors from D'Ambrosio et al. \cite{DAmbrosio2020}}
        \begin{tabular}{lrrrrrrrrrrrrrrr} 
            \toprule
            & & && \multicolumn{2}{c}{10\% projection} && \multicolumn{2}{c}{20\% projection} \\
            \cmidrule{5-6} \cmidrule{8-9}
            \rule{0pt}{10pt} 
            Instance & $n$ & $C$ && Time & Quality && Time & Quality \\
            \midrule
                          urand-5000-5000 & 5000  &      1.0 &&  11.54 & 66.95 &&  45.78 & 75.87  \\
             urand-5000-5100 & 5000  &     1.02 &&  22.03 & 67.28 &&  39.82 & 75.52  \\
             urand-5000-5200 & 5000  &     1.04 &&  15.90 & 67.27 &&  44.07 & 75.62  \\
             urand-5000-5300 & 5000  &     1.06 &&  13.87 & 66.78 &&  44.26 & 75.38  \\
             urand-5000-5500 & 5000  &      1.1 &&  11.35 & 67.09 &&  41.66 & 75.46  \\
             urand-5000-5800 & 5000  &     1.16 &&  16.29 & 66.63 &&  34.65 & 74.97  \\
             urand-5000-5900 & 5000  &     1.18 &&  11.32 & 66.92 &&  40.89 & 75.13  \\
             urand-5000-6000 & 5000  &      1.2 &&  10.62 & 66.25 &&  44.20 & 75.10  \\
             urand-5000-6100 & 5000  &     1.22 &&  20.97 & 66.71 &&  39.44 & 74.89  \\
             urand-5000-6200 & 5000  &     1.24 &&  19.39 & 66.81 &&  38.51 & 75.31  \\
             urand-5000-6300 & 5000  &     1.26 &&  14.68 & 67.02 &&  44.66 & 75.21  \\
             urand-5000-6400 & 5000  &     1.28 &&  18.34 & 66.80 &&  41.28 & 74.93  \\
             urand-5000-6500 & 5000  &      1.3 &&  19.74 & 66.63 &&  43.44 & 74.55  \\
             urand-5000-6600 & 5000  &     1.32 &&  13.49 & 66.49 &&  46.54 & 74.74  \\
             urand-5000-6700 & 5000  &     1.34 &&  18.65 & 66.64 &&  46.45 & 74.88  \\
             urand-5000-6800 & 5000  &     1.36 &&  14.86 & 66.61 &&  44.34 & 75.08  \\
             urand-5000-6900 & 5000  &     1.38 &&  12.67 & 66.59 &&  45.53 & 74.55  \\
             urand-5000-7100 & 5000  &     1.42 &&  14.96 & 66.55 &&  41.86 & 74.39  \\
            \bottomrule
        \end{tabular}
        \label{tab: maxsat}
    \end{table}

Now, we are interested to see if the quality of the projections becomes better as we increase the dimension of the graphs. Following this, Figure \ref{fig: quality} shows the average (among 5 graphs per size) progression of the quality as we increase the number of vertices of graphs generated by rudy with a density of 30\%. Here we can see that we get better and better approximations by projecting the matrix to a tenth of its original size. Moreover, the computation savings also grow with dimension, reaching a point in which the original problem cannot be solved but the projected one can. 

\begin{figure}
    \centering
    \includegraphics[width=0.6\linewidth]{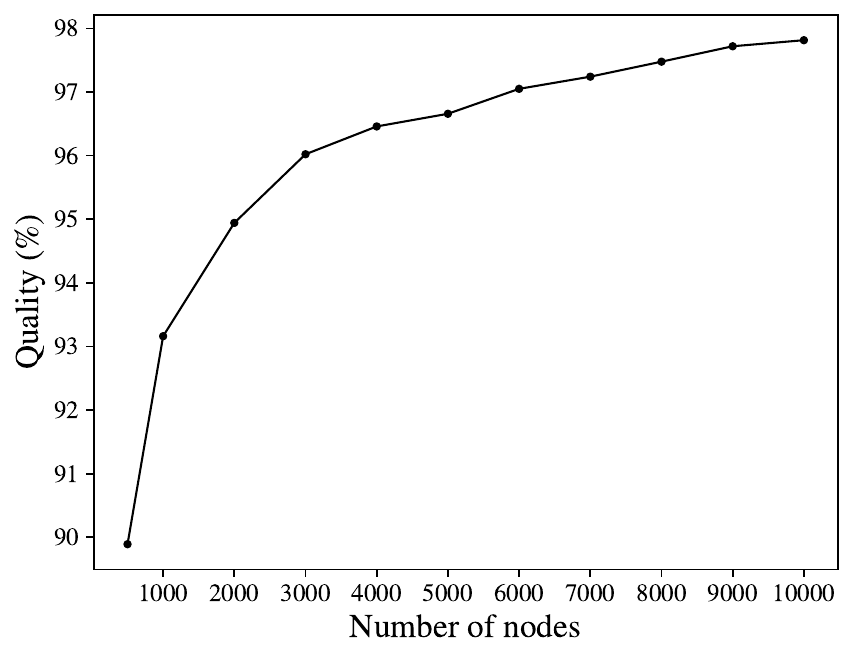}
    \caption{Geometric mean of approximation for a 10\% projection of \textsc{maxcut} for different graph sizes of rudy graphs with 30\% density}
    \label{fig: quality}
\end{figure}

Lastly, Lemma \ref{lemma: approx feasibility} can also be used to solve SDP feasibility problems of reduced dimension. By correctly picking the projection size and the projector density we can ensure that the projected feasibility problem will be feasible with high probability given that the original problem is feasible. If the original problem is infeasible, the projected problem is infeasible with probability 1 since the feasible region of the projected problem is always contained in the feasible region of the original problem, as indicated in Corollary \ref{cor: inclusion of feasible regions}. Now, a possible application of this is solving the Gap Relaxation of the satisfiability problem \textsc{2-sat}, or other SDP relaxations of more general $k$-\textsc{sat} problems \cite{deKlerk2000, deKlerk2003}. The SDP relaxation of \textsc{2-sat} is
\begin{align} \label{gap relaxation}
\text{find} \ & X \succeq 0\\ \notag
\text{s.t.} \ & \text{diag}(X) = 1\\
& \text{sign}(x_i) X_{0,i} + \text{sign}(x_j) X_{0,j} - \text{sign}(x_ix_j)X_{0,j} =  1 \quad \forall(i,j) \in C, \notag
\end{align} 
where $C$ is the set of clauses involving two variables, and entry $X_{0,i}$ is associated with $x_i$ and $X_{i,j}$ with $x_i x_j$. We can solve this problem to detect unsatisfiability. If this relaxation is not feasible, then the formula is not satisfiable. If the projected problem is not feasible, then we know for sure that the gap relaxation \eqref{gap relaxation} is also infeasible. However, the projected problem may be infeasible while the original one is feasible. 
But if the projected dimension and the sparsity of the projector are chosen right, this happens with very low probability. 
Table \ref{tab: Gap relaxation} shows how for 20\% and 50\% projections, we can in many cases trust the result of the projection. 

\begin{table}[!htbp]
    \centering
    \captionof{table}{Proportion (\%) of feasible (satisfiable) instances of \textsc{2-sat} that are also feasible after projection (among 10 satisfiable randomly generated formulas).}
    \begin{tabular}{lrrrrrrrrrrrrrrr} 
        \toprule
        n & C & 20\% projection & 50\% projection \\
        \midrule
                  500 &      0.5 & 100  & 100  \\
                      &        1 & 0  & 100  \\
                      &        2 & 0  & 100  \\
             \midrule
                 1000 &      0.5 & 100 & 100  \\
                      &        1 & 0 & 100 \\
                      &        2 & 0  & 100  \\
             \midrule
                 2000 &      0.5 & 100  & 100 \\
                      &        1 & 0  & 100 \\
                      &        2 & 0  & 100 \\
            \bottomrule
    \end{tabular}
    \label{tab: Gap relaxation}
\end{table}

\subsection{Binary Polynomial Optimization}
Polynomial optimization refers to the study of optimization problems where the objective and the constraint functions are polynomials. 
Many optimization problems fall into this category, including binary problems since one can encode binary variables in polynomial constraints.
Problems with binary variables can be written as polynomial optimization problems by adding the constraint $x^2 = x$. 
However, polynomial optimization problems become nonlinear as soon as the degree of the polynomial is larger than 1, and one does not have to increase the degree too much before these problems become extremely difficult to solve. 
For instance, polynomial optimization problems with objective function and constraints of degree 2 are quadratically constrained quadratic programs (QCQP), which are known to be NP-hard for the general case \cite{boyd_vandenberghe_2004}. 
Moreover, Nesterov showed in \cite{Nesterov2003RandomWI} that the maximization of cubic or quartic forms over the Euclidean ball is NP-hard. 
But still, polynomial optimization problems appear in many applications, such as graph and combinatorics problems \cite{Gvozdenovi2006, Laurent2008, Laurent2003}, or to search for Lyapunov functions in control problems \cite{Parrilo2008, Didier2005}. 
In the early 2000s, a new theory was developed that allowed to approximate any polynomial optimization problem by solving a sequence of convex optimization problems. 
In particular, Lasserre \cite{Lasserre2001} and Parrilo \cite{Parrilo2003} showed that we can construct a hierarchy of semidefinite programming (SDP) problems that converges to the optimal global solution of the polynomial optimization problem. 
Even though SDP problems can be solved to any desired accuracy in polynomial time using algorithms such as interior method (IP) methods \cite{Nesterov1994}, they scale quite badly in practice since current technology cannot handle the memory requirements of IP methods.
This means that in practice, since the size of the aforementioned hierarchies grows very quickly, for most practical applications one cannot solve more than the first level of problems of up to 10 variables \cite{Ahmadi2015}. 

We now present experiments for the SOS relaxation of the Stable Set Problem. 
But first, let us quickly introduce SOS relaxations of PO problems. 
consider the following polynomial optimization problem
\begin{align*}\tag{PO}\label{polynomial optimization problem}
    \min \quad & f(x) \\
    \text{s.t.} \quad & g_i(x) \geq 0 \quad i=1, \hdots, s
\end{align*}
where $f(x)$ and $g_i(x)$ for all $i=1, \hdots, s$ are polynomials. 
Note that finding the optimal value of this function is equal to finding the largest $\gamma$ so that when we subtract $\gamma$ from the objective function we remain nonnegative for all feasible $x$. 
Following this, the optimal value of \eqref{polynomial optimization problem} is equivalent to that of the following problem
\begin{align*}
    \max \quad & \gamma \\
    \text{s.t.} \quad & f(x) - \gamma \geq 0 \quad \forall x \in K
\end{align*}
where K is the semialgebraic set $K = \{x \mid g_i(x) \geq 0 \text{ for } i=1, \hdots, s\}$. 
In general, characterising the non-negativity of a polynomial over a semialgebraic set is a hard problem.
Under an assumption on $K$ called the archimedean condition, which is similar to compactness (see \cite{Laurent2008} for more details), one can use Putinar's Positivstellensatz to replace the non-negativity condition $f(x) - \gamma \geq 0$ by the following positivity certificate
\begin{align*}\tag{SOS}\label{SOS problem}
    \max \quad & \gamma \\
    \text{s.t.} \quad & f(x) - \gamma = \sigma_0(x) + \sum_{i=1}^s \sigma_i(x)g_i(x), \\
    &\sigma_0, \sigma_i(x) \in \Sigma[x] \text{ for }i=1,\hdots, s 
\end{align*}
where $\Sigma[x]$ is the subring of sum-of-squares polynomials. 
We can then bound the degree of the polynomials $\sigma_0, \sigma_i(x)$ by $t$ to get a hierarchy of approximations (SOS-t) of the original problem \cite{Lasserre2001, Parrilo2003}. 
This hierarchy converges to the optimal solution as we increase the degree bound $t$.
We can then use the fact that a polynomial $p$ of degree $2d$ is a SOS polynomial if and only if it can be written as 
\begin{equation*} 
    p(x) = z(x)^{\top}Qz(x)
\end{equation*}
for some positive semidefinite matrix $Q$ and the vector $z(x)$ of monomials of degree up to $d$. 
Following this, we rewrite the previous (bounded degree) problem as an SDP problem by adding a psd matrix for each SOS polynomial in (SOS-t).
The size of these matrices will be $\binom{n+d}{d} \times \binom{n+d}{d}$ where $2d$ is the degree of the SOS polynomials we want to represent, and $n$ is the number of variables of the problem. 

The stable set problem is the problem of finding the maximum stable set in a graph, where a set of vertices is stable if no vertices are connected with an edge. 
Let $G=(E, V)$ be a non-directed graph defined by the set of vertices $E$, and edges $V$. 
We want to find the maximum number of vertices $v \in V$ such that no two chosen vertices are connected by an edge. 
In the polynomial optimization problem, we encode the binary variables using the polynomial $x_v^2 = x_v$ for each vertex. An advantage of the polynomial formulation is that we can use the Lasserre Hierarchy to obtain tighter bounds than for the IP formulation. 

For some classes of graphs, such as perfect graphs, the first level of the relaxation is enough to get the optimal independence number. 
However, for so-called imperfect graphs, one needs to go to the second, or even further, level to get the optimal solution. 
We are interested in this class of graphs and show that our methodology can be applied to them to reduce the computational burden of solving the second level of the hierarchy.
We focus on four sets of imperfect graphs: generalised Petersen graphs, helm graphs, Jahagir's graph, and imperfect graphs generated with the method introduced in \cite{Dobre2015}, which we refer to as cordones graphs. 
We decided to work with the complement since the resulting SDPs will have fewer constraints, which is a desirable feature for our method. 
We denote generalised Petersen graphs as \textit{petersen-v-k}, where $v$ is the number of vertices in the inner circle, and $k$ is the modulo of the subscripts of the edges. 
Similarly, we denote helm graphs as \textit{helm-v} where $v$ is the number of vertices in the inner and outer circle, and Jahagir's graph as \textit{jahagir-v-k} where $v$ is the number of vertices between inner vertices, and $k$ is the number of inner vertices. 
We denote the graphs taken from Dobre and Vera \cite{Dobre2015} as \textit{cordones-n}, and these graphs have $3n+2$ vertices. For all graphs, we add the prefix \textit{c-} to denote the complement of the graph. 
    \begin{table}[!htbp]
    \centering
    \captionof{table}{Relative time (\%) and relative quality (\%) with respect to different projections of the second level of the Stable Set Problem for dense imperfect graphs with $n$ vertices, $m$ edges and $c$ constraints.}
    \begin{tabular}{lrrrrrrrrrrr} 
        \toprule
        & & & & \multicolumn{1}{c}{original 2\textsuperscript{nd} level} && \multicolumn{3}{c}{projected 2\textsuperscript{nd} level} \\
        \cmidrule{5-5} \cmidrule{7-9}
        \rule{0pt}{10pt} 
        Graph & $n$ & $m$ & $c$  & Size && Size & Qlt & Time \\
        \midrule
        c-petersen-20-2 &       40 &      720 &      102 &      821 &&       82 &       99 &     7.63 \\
             c-petersen-30-2 &       60 &     1680 &      152 &     1831 &&      183 &      100 &     2.24 \\
             c-petersen-40-2 &       80 &     3040 &      202 &     3241 &&      324 &      100 &     4.48 \\
             c-cordones-20 &       62 &     1430 &      525 &     1954 &&      195 &       75 &     7.27 \\
             c-cordones-30 &       92 &     3195 &     1085 &     4279 &&      428 &       87 &    20.48 \\
             c-cordones-40 &      122 &     5660 &     1845 &     7504 &&      750 &       89 &    18.33 \\
             c-helm-21 &       43 &      840 &      129 &      947 &&       95 &      100 &    14.55 \\
             c-helm-31 &       63 &     1860 &      189 &     2017 &&      202 &      100 &     2.29 \\
             c-helm-41 &       83 &     3280 &      249 &     3487 &&      349 &      100 &     7.00 \\
             c-jahangir-17-2 &       35 &      544 &       88 &      631 &&       63 &       99 &     5.88 \\
             c-jahangir-19-3 &       58 &     1577 &      136 &     1712 &&      171 &      100 &     1.94 \\
             c-jahangir-41-2 &       83 &     3280 &      208 &     3487 &&      349 &      100 &     4.18 \\
        \bottomrule
    \end{tabular}
    \label{tab: stable set}
\end{table}

It is important to note that for graphs with a large number of vertices, the moment approach is also simplified considerably. This means that if were to solve the problem using this method, the size of the psd matrices of the second level would not be as large as those of Table \ref{tab: stable set}. However, these experiments show that for polynomial optimization problems with a small number of constraints random projections work well. 

\section{Conclusion}
In this paper we have presented updated probabilistic bounds for some of the most used results in the theory of RPs for MPs that depend on the sparsity parameter of a sub-gaussian projection. 
We have then built on these bounds to show that we can approximately solve semidefinite programming problems by applying RPs.
We then evaluated the performance of the proposed method with some well-known applications of semidefinite relaxations. 
In particular, we show that we can project unweighted SDP relaxations of \textsc{maxcut} instances while achieving very high quality solutions.
Slightly weaker results are obtained for relaxations of \textsc{max-2-sat}. 
We also show that random projections can be applied to feasibility problems, such as the Gap relaxation of \textsc{2-sat}. 
Lastly, we also project the SDP appearing in the second level of the Lasserre Hierarchy of the Stable Set Problem. 
We show that for problems with very few constraints, RPs work well. 
Since in practice, we could only apply this methodology for PO problems that have few constraints, it is also sensible to think that the constraint aggregation approach of \cite{Liberti2021} might also be promising for this application. We have started exploring this research direction, but presenting it here is beyond the scope of this paper. 

\section*{Acknowledgement}
This project has been partially funded by the Japan Society for the Promotion of Science (JSPS) through their pre-doctoral fellowship programme. 

\printbibliography

\end{document}